%
%

\documentclass[aip,reprint]{revtex4-2}


\usepackage{amsmath,amssymb,amsthm}
\usepackage{graphicx} 
\usepackage[dvipsnames]{xcolor}

\newtheorem{lemma}{Lemma}
\newtheorem{definition}{Definition}
\newtheorem{theorem}{Theorem}
\newtheorem{proposition}{Proposition}

\newcommand{\R}{\mathbb{R}}

\newcommand{\Z}{\mathbb{Z}}

\newcommand{\khat}{\hat{k}}
\newcommand{\xhat}{\hat{x}}
\renewcommand{\vec}[1]{\mathbf{#1}}

\newcommand{\iin}{\mathrm{in}}
\newcommand{\out}{\mathrm{out}}

\newcommand{\nnodes}{N} 
\newcommand{\jnodes}{J} 
\newcommand{\lnodes}{L}

\begin{document}


\title{Stability of heteroclinic cycles in rings of coupled oscillators} 



\author{Claire M.~Postlethwaite}
\affiliation{Department of Mathematics, University of Auckland}
\author{Rob Sturman}
\affiliation{School of Mathematics, University of Leeds}


\date{\today}

\begin{abstract}
Networks of interacting nodes connected by edges arise in almost every branch of scientific enquiry. The connectivity structure of the network can force the existence of invariant subspaces, which would not arise in generic dynamical systems. These invariant subspaces can result in the appearance of robust heteroclinic cycles, which would otherwise be structurally unstable. Typically, the dynamics near a stable heteroclinic cycle is non-ergodic: mean residence times near the fixed points in the cycle are undefined, and there is a persistent slowing down. In this paper, we examine ring networks with nearest-neighbour or nearest-$m$-neighbour coupling, and show that there exist classes of heteroclinic cycles in the phase space of the dynamics. We show that there is always at least one heteroclinic cycle which can be asymptotically stable, and thus the attracting dynamics of the network are expected to be non-ergodic. 
We conjecture that much of this behaviour persists in less structured networks and as such, non-ergodic behaviour is somehow typical.
\end{abstract}

\pacs{}

\maketitle 

\section{Introduction}


{\bf From the coupled map lattices proposed in the 1980s to the modern discipline of complex networks the study of simple systems connected in some way forms a fundamental paradigm in dynamical systems. Applications are plentiful and diverse, and include spatially extended systems, chemical reactions, biological and ecological networks, and span many length scales\cite{strogatz2001exploring}. In many cases, a goal of study is {\em spatiotemporal chaos}\cite{kaneko1989spatiotemporal}, and this might typically mean computing a long-term average (for example, a Lyapunov exponent). However, it is well-known that a class of networks --- those with invariant subspaces forced by symmetries in the system --- permit {\em heteroclinic cycles\cite{guckenheimer1988structurally}}, that is, trajectories along which time averages do not converge, instead slowing down as they repeatedly and systematically get closer and closer to fixed points. We investigate a family of coupled map lattices defined on ring networks and establish stability properties of the many possible families of heteroclinic cycles. }

Specifically, we consider how the structure of the architecture, or topology, of the network of physical nodes determines the architecture of a heteroclinic network in phase space between fixed points. We note that this question was asked in the reverse by Ashwin and Postlethwaite~\cite{ashwin2013designing}, who showed how to construct a system of ordinary differential equations into which was embedded a heteroclinic network of any specified topology.

In particular, we consider systems of the form
\begin{equation}\label{eq:gen_coupled} 
x_{i+1}^{(k)} = f(x_i^{(k)}) e^{-\gamma \sum_{\khat} A_{\khat k} x_i^{(\khat)}},\quad k=1,\dots, \nnodes,
\end{equation}
where $\gamma\le 0$ is a parameter. Each of the $x_i^{(k)}\in[0,1]$ measures the activity at time $i$ of the $k$th `node' in a network, and $A_{\khat k}$ is an adjacency matrix which determines the connectivity between the different nodes. We take the function $f$ to be the logistic map
\[
f(x)=rx(1-x)
\]
where $r\in(0,4]$ is a parameter. The dynamics of the uncoupled system with $\gamma =0$
is well known~\cite{brin2002introduction}, but briefly, for $r\in(0,1]$, the origin is an asymptotically stable fixed point. At $r=1$ there is a transcritical bifurcation creating a second fixed point at $x=\frac{r-1}{r}$. This fixed point is asymptotically stable for $r\in (1,3]$, and at $r=3$ undergoes a period-doubling bifurcation which leads to a period-doubling cascade followed by the onset of chaos at $r\approx 3.56995$. 

When $x^{(\hat{k})}$ is non-zero for some $\hat{k}$, then the term $e^{-\gamma \sum_{\khat} A_{\khat k} x^{(\khat)}}$ in~\eqref{eq:gen_coupled}, with $\gamma >0$,  has an inhibitory effect upon any node connected to $\hat{k}$, i.e.~those for which $A_{\khat k}=1$. Specifically, if $\gamma x^{(\hat{k})}$ is large enough, then this term can have the same effect as reducing the value of $r$ in the $x^{(k)}$ equation to less than 1, and hence causing the values of those $x^{(k)}$ to decrease towards zero. Heuristically, it is then clear that oscillatory behaviour is possible, as nodes can alternately be active (have a non-zero value), and hence inhibit those nodes they are connected to; decay, when other nodes in turn inhibit them; and finally grow again to an active state as the nodes inhibiting them decay in turn.

In figure~\ref{fig:three} we show a time series from a cycle of three such coupled nodes, specifically, the set of equations
\begin{align}
x_{i+1}^{(1)} &= f(x_i^{(1)} ) e^{-\gamma x_i^{(3)}}, \nonumber \\
x_{i+1}^{(2)}  &= f(x_i^{(2)}) e^{-\gamma x_i^{(1)}}, \label{eq:GH} \\
x_{i+1}^{(3)} &= f(x_i^{(3)}) e^{-\gamma x_i^{(2)}}. \nonumber 
\end{align} 
In panel (a), we use $r=2$, so the dynamics of the uncoupled system contains a stable fixed point. The time series clearly shows the trajectory cycling between three fixed points, in a manner essentially identical to that seen in the well-known Guckenheimer--Holmes heteroclinic cycle\cite{guckenheimer1988structurally}. In panel (b), we use $r=3.5$, so the uncoupled system is in the chaotic regime, and we see cycling between three chaotic attractors. This phenomena was previously described by Ashwin, Rucklidge and Sturman~\cite{ashwin2002infinities,ashwin2003decelerating}.

\begin{figure}[tb]
\setlength{\unitlength}{1mm}
\begin{center}
\begin{picture}(85,70)(0,0)
\put(0,0){\includegraphics[trim=5mm 0mm 5mm 5mm,clip=true,width=85mm]{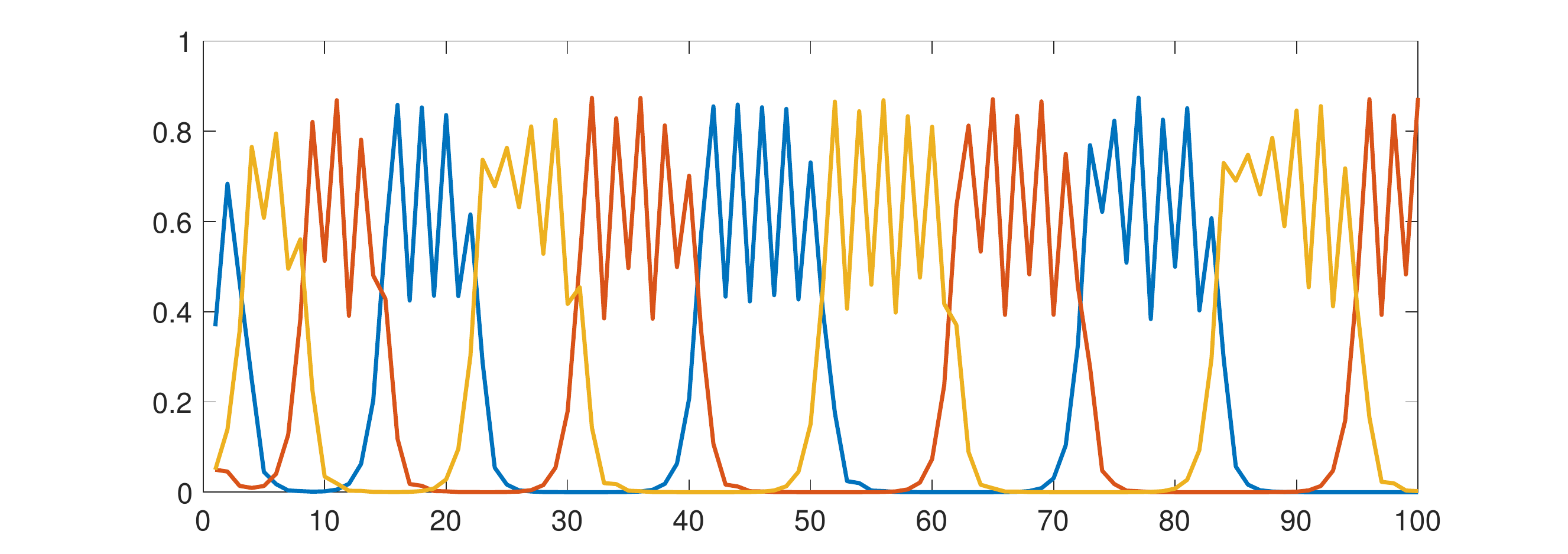}}
\put(0,35){\includegraphics[trim=5mm 0mm 5mm 5mm,clip=true,width=85mm]{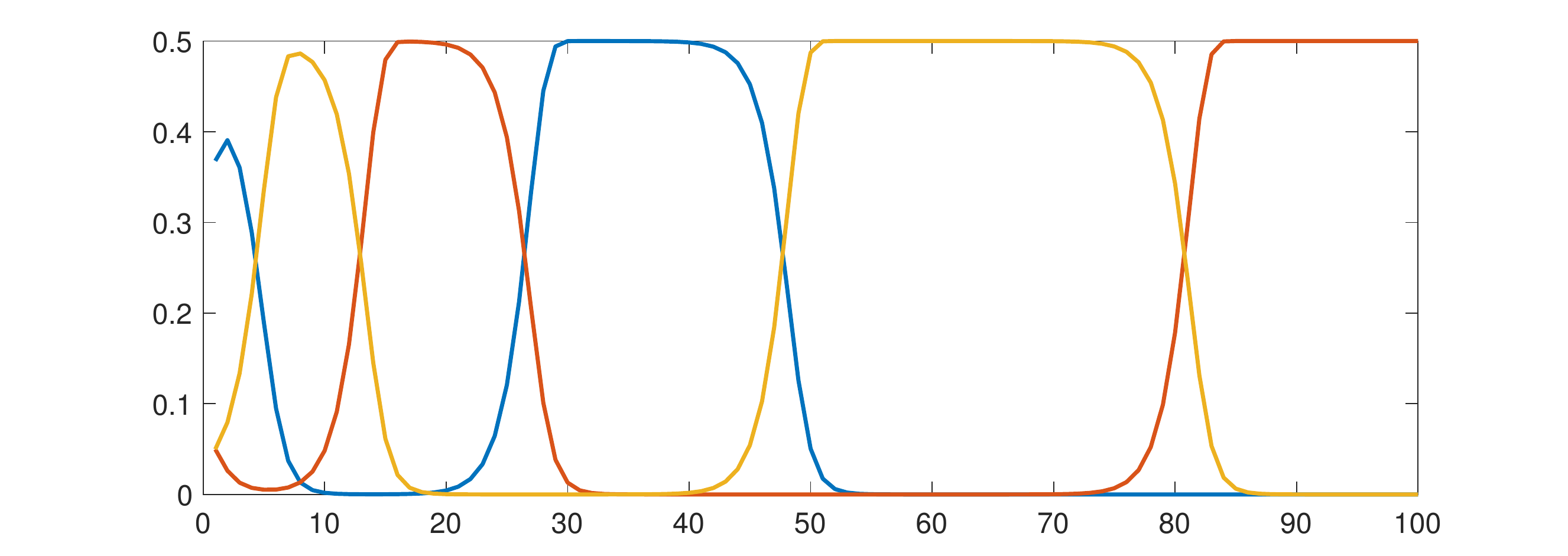}}

\put(0,20){\rotatebox{90}{$x^{(k)}$}}
\put(0,55){\rotatebox{90}{$x^{(k)}$}}

\put(73,0){$i$}
\put(73,35){$i$}

\put(0,65){(a)}
\put(0,30){(b)}

\end{picture}
\end{center}
\caption{\label{fig:three} The figures show time series for equations~\ref{eq:GH}, where components $x^{(1)}$, $x^{(2)}$ and $x^{(3)}$ are shown by the blue, red and yellow lines respectively. In panel (a), $r=2$, $\gamma=3.5$; in panel (b), $r=3.5$, $\gamma=3.5$.}
\end{figure}

In this paper, we extend the work of Ashwin et al.~\cite{ashwin2002infinities,ashwin2003decelerating} and consider larger networks of coupled systems in the form of~(\ref{eq:gen_coupled}). We refer these equations as describing the network of connections between nodes in \emph{physical space}, and for the remainder of the paper, refer to this network as instead a \emph{directed graph} with \emph{directed edges} between \emph{nodes}.
We begin in section~\ref{sec:egfive}, by considering another example: a five-node ring graph with one-way nearest-neighbour coupling. We determine the fixed points and the heteroclinic connections which exist between them. We refer to this network of connection as the \emph{phase space network}, or \emph{heteroclinic network}, which has \emph{heteroclinic connections} (or sometimess simply \emph{connections}) between \emph{fixed points}.
In section~\ref{sec:general}, we consider general systems of the form of~(\ref{eq:gen_coupled}) and describe how to find the fixed points and heteroclinic connections for such a system. In general, this procedure results in a very complex heteroclinic network that is difficult to analyse, so in section~\ref{sec:n1} we look in detail at $\nnodes$-node directed graphs with one-way nearest neighbour coupling in the physical space. Here, as well as determining the structure of the heteroclinic network in phase space, we are able to analyse the dynamic stability of subcycles within the network. We use results from Podvigina~\cite{podvigina2012stability}, and some classical results on solutions to polynomials~\cite{cauchy1829leccons,rouche1866memoire} to prove theorem~\ref{thm:stab_thm}, which shows that only one of the subcycles can ever be stable, and then, only if $\gamma$ is large enough. In section~\ref{sec:nm} we make some conjectures about larger networks. Section~\ref{sec:disc} concludes.

\section{Example: five node ring graph with nearest-neighbour coupling}
\label{sec:egfive}

\begin{figure}[tb]
\setlength{\unitlength}{1mm}
\begin{center}
\begin{picture}(85,115)(0,0)
\put(7,0){\includegraphics[width=70mm]{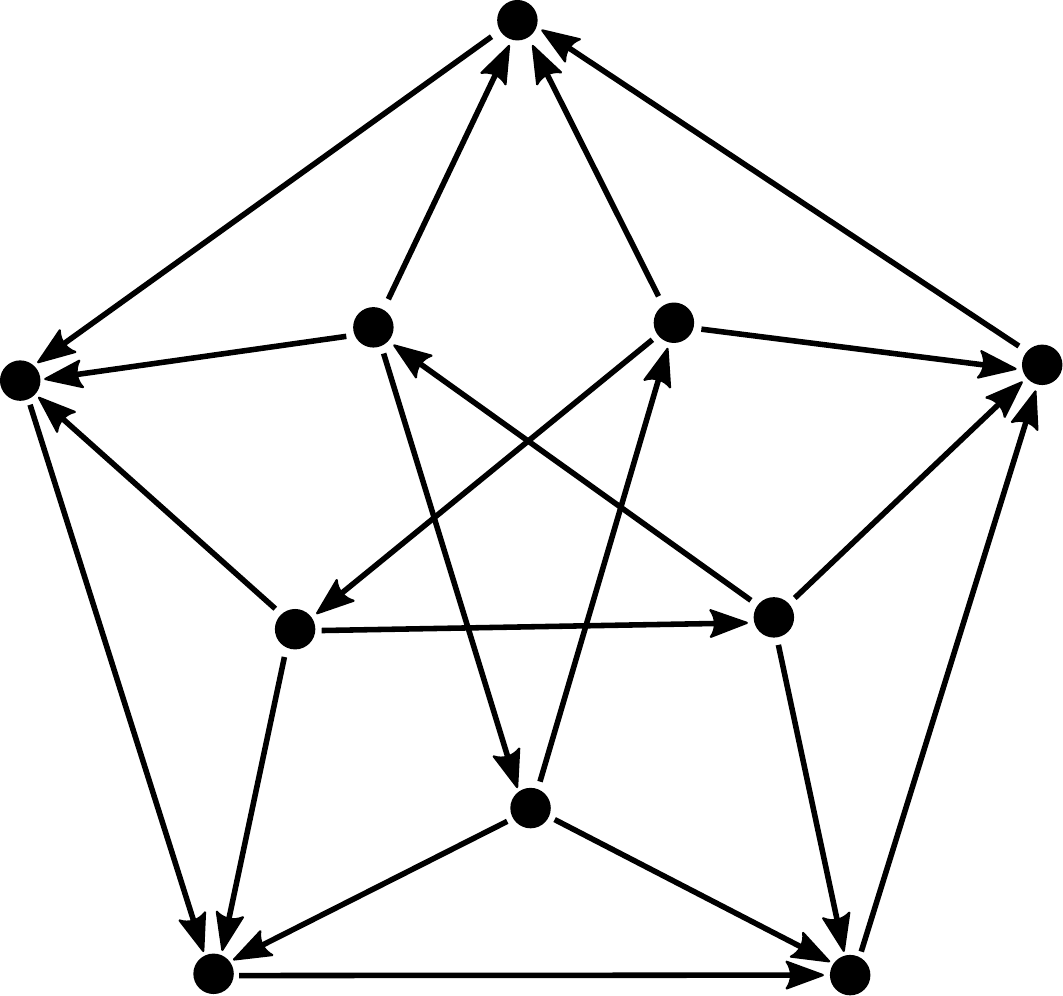}}
\put(17,75){\includegraphics[width=45mm]{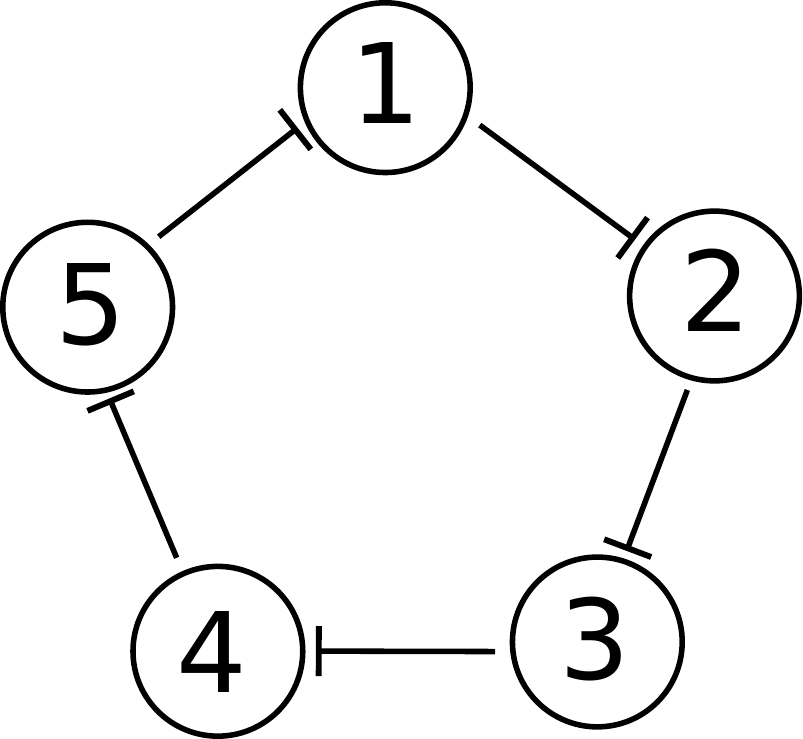}}

\put(51,47){$\xi_1$}
\put(29,46.5){$\xi_3$}
\put(57,28){$\xi_4$}
\put(25,27){$\xi_5$}
\put(41,8){$\xi_2$}

\put(74,44.5){$\xi_{1,4}$}
\put(5,43.5){$\xi_{3,5}$}

\put(65,1){$\xi_{2,4}$}
\put(14.5,1){$\xi_{2,5}$}

\put(34.5,64.5){$\xi_{1,3}$}

\put(0,110){(a)}
\put(0,60){(b)}

\end{picture}
\end{center}
\caption{\label{fig:five} Panel (a) shows the physical network described by equations~\eqref{eq:nnc}, with $\nnodes=5$. Here, the physical nodes are shown by circles, and the inhibitory couplings by flat-ended arrows. Panel (b) shows the corresponding heteroclinic network. The dots represent fixed point solutions of the system~\eqref{eq:nnc}, and arrows indicate the presence of a heteroclinic connection. In this figure and those that follow, note the distinction between circles for the nodes of the physical network, and filled dots for the fixed points of the heteroclinic network in phase space. }
\end{figure}

We give, in this section, an example of a five node directed graph with one-way nearest-neighbour coupling. We determine the possible fixed points in phase space, and the heteroclinic connections between them. We show time series of typical trajectories close to the resulting heteroclinic network but defer the computation of the stability of each of the sub-cycles to section~\ref{sec:n1}.

We begin with a few formal definitions.
Consider the map
\begin{equation}\label{eq:genf}
x_{i+1}=g(x_i),\quad x_i\in\R^n, \quad i\in\Z
\end{equation}
with fixed points $\zeta_1,\dots,\zeta_M$. A \emph{heteroclinic connection} between $\zeta_j$ and $\zeta_{j'}$ is a solution to~\eqref{eq:genf} for which $x_i\rightarrow\zeta_j$ as $i\rightarrow -\infty$ and $x_i\rightarrow\zeta_{j'}$ as $i\rightarrow \infty$. Suppose that there exist heteroclinic connections between $\zeta_{j_k}$ and $\zeta_{j_{k+1}}$ for $k=1,\dots,M-1$, and also one between $\zeta_{j_M}$ and $\zeta_{j_1}$. Then the set $H$ consisting of the fixed points $\zeta_j$ and the connecting orbits is a \emph{heteroclinic cycle}. A \emph{heteroclinic network}~\cite{kirk1994competition} is usually defined to be a union of heteroclinic cycles. In this paper, we relax the definition somewhat: we allow a heteroclinic network to consist of a set of fixed point solutions, and heteroclinic connections between them, which contains at least one heteroclinic cycle. Note that this means that not every heteroclinic connection in the network need be part of a cycle.

For the example we consider in this section, the network of nodes in physical space is shown in figure~\ref{fig:five}(a). The equations governing this system are
\begin{equation}\label{eq:five}
x_{i+1}^{(k)} = f(x_i^{(k)}) e^{-\gamma x_i^{(k-1)}},\quad k=1,\dots, 5,
\end{equation}
which are equivariant with respect to a rotation symmetry of the coordinates.

Equation~\eqref{eq:five} has two different types of fixed points solution in phase space which are of interest to us, namely, those with one node active (that is, with a single component that is $O(1)$), or those with two nodes active. More precisely, assume for now that $r\in [1,3]$, and let $\hat{x}=\frac{r-1}{r}$. Then, using coordinates $(x^{(1)}, x^{(2)}, x^{(3)}, x^{(4)}, x^{(5)})$, 
we label the fixed points with only $x^{(1)}$ active as:
\[
\xi_1=(\hat{x}, 0, 0, 0, 0)
\]
and similarly we have $\xi_2,\dots,\xi_5$, where for each $\xi_j$, the $j$th component is equal to $\hat{x}$, and the remainder are equal to zero. Next, we label 
\[
\xi_{1,3}=(\hat{x}, 0, \hat{x}, 0, 0)
\]
and similarly define $\xi_{j,m}$, which has the $j$th and $m$th components equal to $\hat{x}$, where $|j-m|\neq 1$ (i.e.~$j$ and $m$ are not adjacent nodes in the ring graph). Note that $\xi_{j,m}\equiv \xi_{m,j}$ but we typically list $j$ and $m$ in increasing numerical order.

Note that in this example, there cannot be any fixed points with more than two components equal to $\hat{x}$, because of the connectivity of the graph: two nodes which are connected by an edge cannot both be active at a fixed point. In larger, more general graphs, we would expect to see fixed points with more active components (see section~\ref{sec:general} for the general setup).

We next consider the dynamics of~\eqref{eq:five} in two two-dimensional subspaces, and show that there exist heteroclinic connections from $\xi_j$ to $\xi_{j-1}$  and from $\xi_j$ to $\xi_{j,j \pm 2}$ (where indices are taken $\mod 5$).

For the first, consider the dynamics in the subspace where $x^{(2)}=x^{(3)}=x^{(4)}=0$, namely the system
\begin{equation}\label{eq:five_2da}
\begin{split}
x_{i+1}^{(1)} &=r x^{(1)}_i(1-x^{(1)}_i) e^{-\gamma x_i^{(5)}}, \\
x_{i+1}^{(5)} &= r x^{(5)}_i(1-x^{(5)}_i)
\end{split}
\end{equation}
System~\eqref{eq:five_2da} has fixed points at $(x^{(1)},x^{(5)})=(\hat{x},0)\equiv \xi_1$ and $(x^{(1)},x^{(5)})=(0,\hat{x})\equiv \xi_5$. Consider an initial condition close to $\xi_1$, but with $x^{(5)}_0\neq 0$. Since the $x^{(5)}$ equation is decoupled from $x^{(1)}$, then it behaves as it would in the uncoupled logistic map, specifically, the $x^{(5)}$ component initially grows and approaches the value $\hat{x}$. As $x^{(5)}$ grows, the coupling term in the $x^{(1)}$ equation has the effect of essentially reducing the $r$ value of the logistic map in the $x^{(1)}$ equation to, eventually, $r e^{-\gamma \hat{x}}$. Thus, if $r e^{-\gamma \hat{x}}<1$, then $x^{(1)}$ will eventually decay to zero, and the trajectory approaches $\xi_5$. There is thus a heteroclinic connection from $\xi_1$ to $\xi_5$, and by symmetry, heteroclinic cycles from $\xi_j$ to $\xi_{j-1}$.

For the second type of connection, consider the dynamics in the subspace where $x^{(2)}=x^{(4)}=x^{(5)}=0$, namely the system
\begin{equation}\label{eq:five_2db}
\begin{split}
x_{i+1}^{(1)} &=r x^{(1)}_i(1-x^{(1)}_i), \\
x_{i+1}^{(3)} &= r x^{(3)}_i(1-x^{(3)}_i)
\end{split}
\end{equation}
In this subspace, both $x^{(1)}$ and $x^{(3)}$ are decoupled from each other. There are three fixed points, $(x^{(1)},x^{(3)})=(\hat{x},0)\equiv \xi_1$, $(x^{(1)},x^{(3)})=(0,\hat{x})\equiv \xi_3$, and $(x^{(1)},x^{(3)})=(\hat{x},\hat{x})\equiv \xi_{1,3}$. Both $\xi_1$ and $\xi_3$ are saddle points, and perturbations close to these fixed points will result in trajectories which approach $\xi_{1,3}$. There are thus heteroclinic connections between $\xi_1$ and $\xi_{1,3}$, $\xi_3$ and $\xi_{1,3}$, and by analogy, heteroclinic connections between any $\xi_j$ and $\xi_{j,m}$ or $\xi_{m,j}$.    

Finally, we consider the dynamics of~\eqref{eq:five} in a three-dimensional subspace, and show that there is a heteroclinic connection from $\xi_{j,j+2}$ to $\xi_{j-1,j+2}$. Consider the dynamics in the subspace where $x^{(2)}=x^{(4)}=0$, namely the system
\begin{equation}\label{eq:five_3d}
\begin{split}
x_{i+1}^{(1)}  &=r x^{(1)}_i(1-x^{(1)}_i) e^{-\gamma x_i^{(5)}}, \\
x_{i+1}^{(3)}  &= r x^{(3)}_i(1-x^{(3)}_i), \\
x_{i+1}^{(5)}  &= r x^{(5)}_i(1-x^{(5)}_i).
\end{split}
\end{equation}
There are, as before, fixed points in this system with one component non-zero, but of interest right now are the two fixed points with $(x^{(1)},x^{(3)},x^{(5)})=(\hat{x},\hat{x},0)\equiv \xi_{1,3}$ and $(x^{(1)},x^{(3)},x^{(5)})=(0,\hat{x},\hat{x})\equiv \xi_{3,5}$. Note first that the $x^{(3)}$ and $x^{(5)}$ components are decoupled. Both have stable fixed points at $x^{(i)}=\hat{x}$. Thus perturbations close to $\xi_{1,3}$ will have an $x^{(3)}$ component which remains close to $\hat{x}$, but, as in the case of the connection between $\xi_1$ and $\xi_5$, the $x^{(5)}$ component will grow, and again, so long as $r e^{-\gamma \hat{x}}<1$, the $x^{(1)}$ component will decay to zero. 

Due to the rotational symmetry of the system~\eqref{eq:five}, we thus have three families of heteroclinic connections, namely:
\begin{equation}\label{eq:five_conns}
\begin{split}
\xi_j &\rightarrow \xi_{j-1} \\
\xi_j & \rightarrow \xi_{j,k}, \quad k=j\pm 2 \\
\xi_{j,j+2} & \rightarrow \xi_{j-1,j+2}
\end{split}
\end{equation}
Note that here, indices are taken $\mod 5$. In later sections, indicies are taken $\mod n$, where $n$ is the number of nodes in the graph.
We refer to the families of heteroclinic connections as $1\rightarrow 1$ connections, $1\rightarrow 2$ connections and $2\rightarrow 2$ connections respectively, where a $p\rightarrow q$ connection is a transition from a fixed point with $p$ nodes active to a fixed point with $q$ nodes active.

The complete set of connections between fixed points is shown in panel (b) of figure~\ref{fig:five}. Notice that there are two heteroclinic cycles, one between fixed points of type $\xi_j$, with $1\rightarrow 1$ connections,  and the other between fixed points of type $\xi_{j,k}$, with $2\rightarrow 2$ connections. The $1\rightarrow 2$ connections are not part of any cycles. In figure~\ref{fig:5node_ts} we show numerical simulations showing trajectories close to each of these cycles. We will compute the stability of cycles of these types in general in section~\ref{sec:n1} that follows. We shall see that the cycle between fixed points of type $\xi_{j,k}$ can have some form of stablility, if parameters are chosen correctly, specifically, if $\gamma>\frac{3 \log r}{2\hat{x}}$, but the cycle between fixed points of type $\xi_{j}$ can never be stable. However, if initial conditions are chosen carefully (in a manner described in that section), we can, as seen in panel (b) of figure~\ref{fig:5node_ts}, observe this cycle for a reasonably long period of time.

\begin{figure}
\setlength{\unitlength}{1mm}
\begin{center}
\begin{picture}(85,90)(0,0)
\put(0,0){\includegraphics[trim=5mm 5mm 5mm 5mm,clip=true,width=85mm]{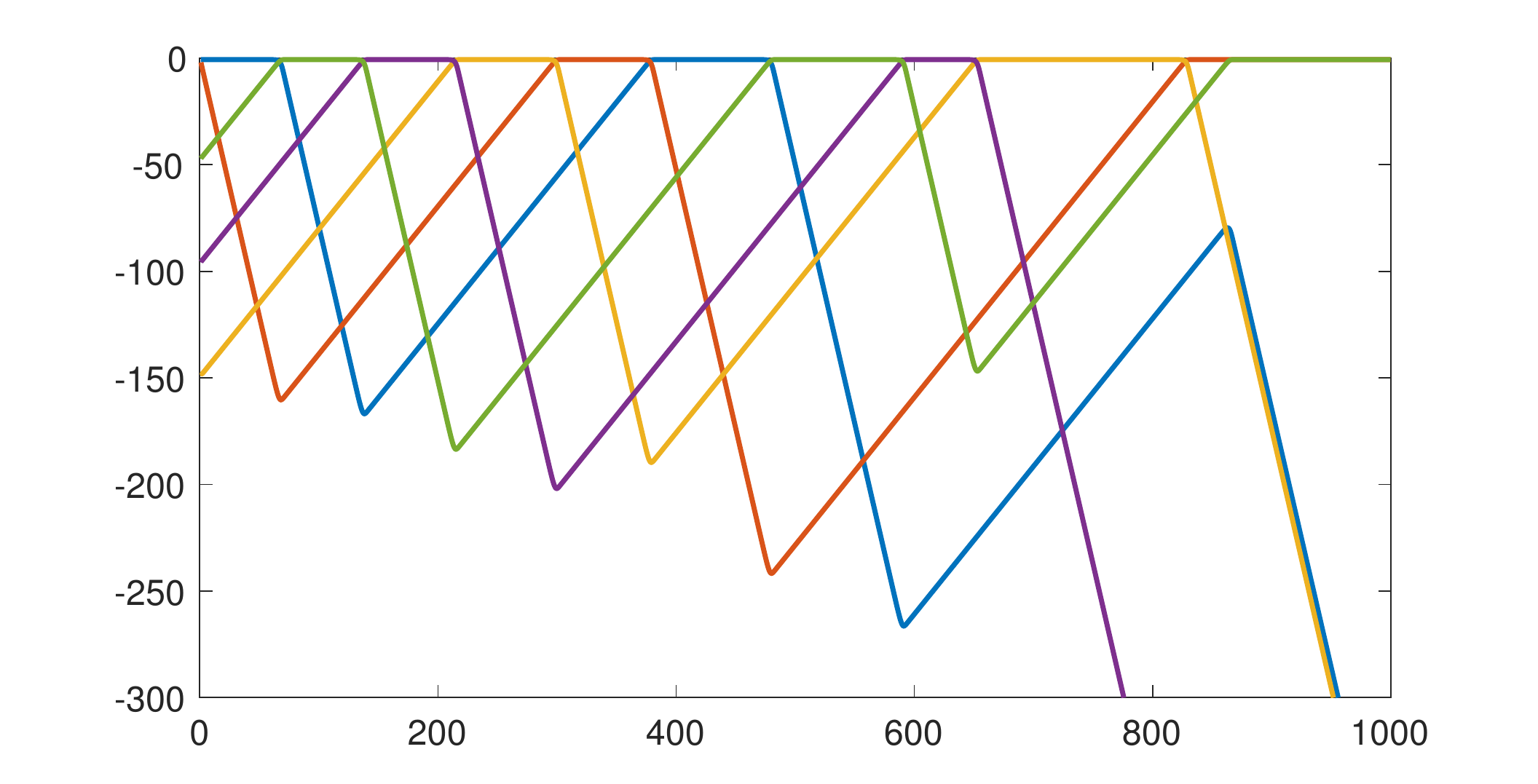}}
\put(0,45){\includegraphics[trim=5mm 5mm 5mm 5mm,clip=true,width=85mm]{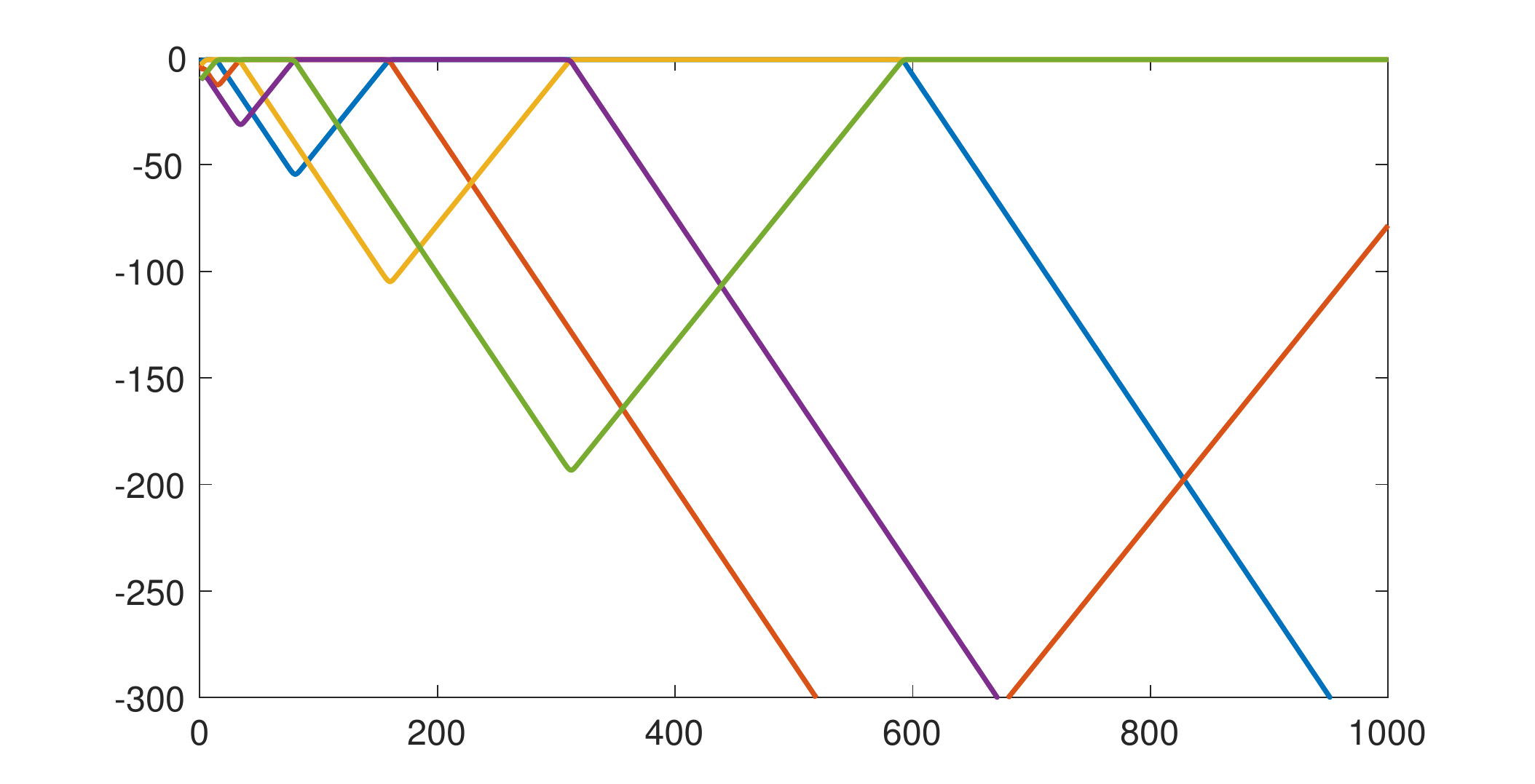}}

\put(0,20){\rotatebox{90}{$\log x^{(k)}$}}
\put(0,65){\rotatebox{90}{$\log x^{(k)}$}}

\put(72,0){$i$}
\put(72,45){$i$}

\put(0,83){(a)}
\put(0,38){(b)}

\end{picture}
\end{center}
\caption{\label{fig:5node_ts} The figure shows trajectories of equation~\eqref{eq:five} cycling between fixed points with (a) two nodes active and (b) one node active. The components $x^{(1)},\dots,x^{(5)}$ are represented by the colours blue, red, yellow, purple and green, respectively. Both panels have $r=2$. Panel (a) has $\gamma=3.04$, panel (b) has $\gamma=6.24$. The cycle shown in (a) is fragmentarily asymptotically stable, but the one in (b) is not, as can be seen at $i\approx 850$ where $x_5$ (green) becomes $O(1)$ and $x_2$ (red) remains on, indicating that the trajectory has moved away from the cycle with only one node active at any one time.}
\end{figure}

\section{Enumeration of fixed points and heteroclinic connections for a general directed graph}
\label{sec:general}

In this section, we describe how to find the fixed points and heteroclinic connections in phase space for any directed graph with inhibitory coupling. In section~\ref{sec:n1} that follows, we apply this to an $n$-node graph with nearest-neighbour coupling. 

\subsection{Enumeration of fixed points}
\label{sec:genfp}

For convenience and readability, we restate the general system~\eqref{eq:gen_coupled} with $\nnodes$ nodes:
\[
x_{i+1}^{(k)} = f(x_i^{(k)}) e^{-\gamma \sum_{\khat} A_{\khat k} x_i^{(\khat)}},\quad k=1,\dots, \nnodes
\]
where $A_{\khat k}$ is an adjacency matrix.
We enumerate the fixed points that can occur in this system, specifically, those with one or more non-zero coordinates. Fixed points can have any number of non-zero coordinates, so long as the corresponding nodes are not adjacent in the physical network. More formally, consider a partition of the first $\nnodes$ natural numbers into two sets:
\[
Z_+=\{\alpha_1,\alpha_2,\dots,\alpha_\jnodes\},\quad Z_0=\{\beta_1,\beta_2,\dots,\beta_{\nnodes-\jnodes}\},
\]
with $\jnodes<\nnodes$, $\alpha_k,\beta_k\in\{1,\dots,\nnodes\}$.
Then the point with
\begin{align*}
x^{(\alpha)}&=\xhat,\quad \alpha\in Z_+, \\
x^{(\beta)}&=0,\quad \beta\in Z_0,
\end{align*}
is a fixed point of~\eqref{eq:gen_coupled} if $A_{\hat{\alpha}\alpha}=0$ for all pairs $(\hat{\alpha},\alpha)\in Z_+\times Z_+$.  We label this fixed point $\xi_{Z_+}$. In the language of graph theory, $Z_+$ is called an \emph{independent set}.


\subsection{Existence of heteroclinic connections in phase space}
\label{sec:hetconns}

Consider a fixed point $\xi_{Z_+}$ in the system~\eqref{eq:gen_coupled}, with $|Z_+|=\jnodes$ (i.e. there are $\jnodes$ nodes active). We label the set of \emph{suppressed} nodes at $\xi_{Z_+}$ to be $Z_s(Z_+)$, where
\[
Z_s(Z_+)=\{ a_1, a_2, \dots ,a_{s(Z_+)} \}
\]
where for each $a_l$, there exists at least one $\alpha_k\in Z_+$ with $A_{a_l \alpha_k}=1$. We further define the \emph{suppression number} of a node $a_l$ to be the number of different $\alpha_k\in Z_+$ with $A_{a_l \alpha_k}=1$. The remaining nodes are the \emph{growing} nodes, and we define $Z_g(Z_+)=\{1,\dots, \nnodes\} \setminus (Z_+ \cup Z_s(Z_+))$.

It is simple to check that the linearisation of system~\eqref{eq:gen_coupled} about $\xi_{Z_+}$ has the following eigenvalues:
\begin{itemize}
\item $\jnodes$ eigenvalues equal to $2-r$, with eigenvectors in each of the directions corresponding to the active nodes.
\item $s(Z_+)$ eigenvalues in the suppressed directions. Each of these will be equal to $r e^{-n_s \gamma \hat{x}}$, where $n_s$ is the suppression number of that node.
\item $\nnodes-\jnodes-s(Z_+)$ eigenvalues equal to $r$. These are the \emph{growing} nodes. 
\end{itemize}

We assume that $1<r<3$, and $r e^{-\gamma \hat{x}}<1$, so each fixed point is a saddle. There are two ways in which heteroclinic connections between fixed points can arise. 

Consider a fixed point $\xi_{Z_+}$, and let $b\in\{1,\dots, \nnodes\} \setminus Z_+$. Consider the subspace in all components in $Z_+\cup \{b\}$ are fixed at zero. There are then three possible cases:
\begin{enumerate}
\item $b\in Z_s(Z_+)$, and so $\xi_{Z_+}$ is a sink in this subspace. There are no heteroclinic connections from $\xi_{Z_+}$ in this subspace.
\item $b\in Z_g(Z_+)$, and  $Z_+\cap Z_s(\{b\})=\emptyset$. Then there is a heteroclinic connection from $\xi_{Z_+}$ to $\xi_{Z_+\cup \{b\}}$. This is a heteroclinic connection of type $|Z_+|\rightarrow|Z_+|+1$.
\item  $b\in Z_g(Z_+)$, and $Z_+\cap Z_s(\{b\})=Z_-$ is non-empty. Then initial conditions near $\xi_{Z_+}$ will having an increasing $x^{(b)}$ component. All $x^{(a)}$ (for $a\in Z_-$) will eventually decay, and the trajectory will asymptote towards the fixed point $\xi_{Z_+\cup \{b\} \setminus \{Z_-\}}$ (which has node $b$ active but nodes in $Z_-$ inactive). This is a heteroclinic connection of type $|Z_+|\rightarrow|Z_+|-|Z_-|+1$.

\end{enumerate}

Note that for the directed graph with nearest neighbour coupling, each node only inhibits one single other node, and so in case 3 above,  $|Z_-|=1$ always. Thus for those examples, 
 the number of active nodes can increase via way of heteroclinic connections, but it can never decrease.


\section{Directed graph with  nearest-neighbour coupling}
\label{sec:n1}

In this section we consider the case of a general $n$-node ring graph, with one-way nearest-neighbour coupling. That is, the system~\eqref{eq:gen_coupled} with $A$ a cyclic permutation matrix, given by equations
\begin{equation}\label{eq:nnc}
x_{i+1}^{(k)} = f(x_i^{(k)}) e^{-\gamma x_i^{(k-1)}},\quad k=1,\dots, \nnodes.
\end{equation}
We refer to these graphs as $(\nnodes,1)$-graphs.

Note that the system~\eqref{eq:nnc} is equivariant with respect to the group $\mathbb{Z}_\nnodes$, generated by the element
$\sigma$, which has action
\begin{equation}\label{eq:sym}
\sigma(x^{(1)},\dots,x^{(\nnodes)})=(x^{(\nnodes)},x^{(1)},\dots,x^{(\nnodes-1)})
\end{equation}

As for the five-node system, equations~\eqref{eq:nnc} have $n$ fixed point solutions $\xi_j$ each with the $j$th component equal to $\hat{x}$, and all other components zero. Other fixed points are found using the method in section~\ref{sec:genfp}\footnote{For the $(N,1)$-graph case the fixed points can be enumerated directly with the Lucas numbers $L_n$, given by $L_1 = 1, L_2 = 3, L_{n} = L_{n-1}+L_{n-2}, n\ge 3$. The $(N,1)$-graph has $L_n$ fixed points, including the configuration with $N$ zero components.}. Note that for any set of nodes $Z_+$, $Z_s(Z_+)=|Z_+|$, that is, the number of suppressed nodes is the same as the number of active nodes, and each suppressed node has a suppression number of one.

\subsection{Examples of heteroclinic networks}

Using the methods described in section~\ref{sec:general} we give some further examples of the heteroclinic networks which occur in $(\nnodes,1)$-graphs.

\subsubsection{$(6,1)$-graph}

The $(6,1)$-graph has six fixed points with one non-zero component, nine with two non-zero components, and two with three non-zero components. The latter are stable and the remainder are saddles. Each of the $\xi_j$ fixed points with one non-zero component has heteroclinic connections to $\xi_{j-1}$, and to the three fixed points $\xi_{j,j+2}$, $\xi_{j,j+3}$ and $\xi_{j,j+4}$ with two non-zero components. Note that the fixed points with two non-zero components can be divided into two classes depending on whether the spaces between the active components is $2$ and $2$ (the fixed points $\xi_{j,j+3}$) or $1$ and $3$ (the fixed points  $\xi_{j,j+2}$). The subset of the heteroclinic network between just these fixed points is shown in figure~\ref{fig:six1} .

\begin{figure}
\begin{center}
\setlength{\unitlength}{1mm}
\begin{picture}(55,55)(0,0)
\put(0,0){\includegraphics[width=5.5cm]{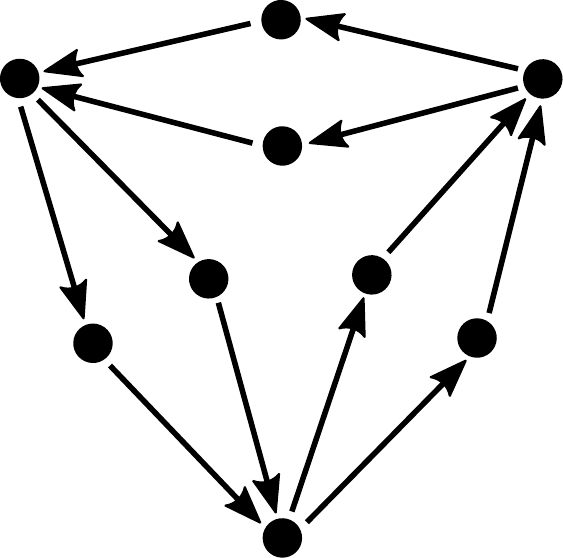}}

\put(50,50){$\xi_{1,4}$}
\put(0,50){$\xi_{3,6}$}
\put(25,55.5){$\xi_{4,6}$}
\put(49,20){$\xi_{2,4}$}
\put(9,24){$\xi_{2,6}$}
\put(30,0){$\xi_{2,5}$}
\put(23,24){$\xi_{3,5}$}
\put(25,43){$\xi_{1,3}$}
\put(32,30.5){$\xi_{1,5}$}

\end{picture}
\end{center}
\caption{\label{fig:six1} The figure shows part of the resulting heteroclinic network between fixed points for the $(6,1)$-graph. Not shown are the $\xi_j$ fixed points, the heteroclinic connections from these, or the $x_{j,j+2,j+4}$ fixed points (which are stable).
}
\end{figure}

\subsubsection{$(7,1)$-graph}

The $(7,1)$-graph has seven fixed points with one non-zero component, fourteen with two non-zero components, and seven with three non-zero components. The fixed points with two non-zero components can further be divided into two classes of types $\xi_{j,j+2}$ and $\xi_{j,j+3}$. The heteroclinic network between the fixed points with two or three non-zero components is shown in figure~\ref{fig:seven1}. Each of the $\xi_j$ fixed points, which is not shown here, will have heteroclinic connections to $\xi_{j-1}$, $\xi_{j,j+2}$, $\xi_{j,j+3}$ and $\xi_{j,j+4}$.

\begin{figure}
\begin{center}
\setlength{\unitlength}{1mm}
\begin{picture}(75,75)(-5,0)
\put(0,0){\includegraphics[width=7cm]{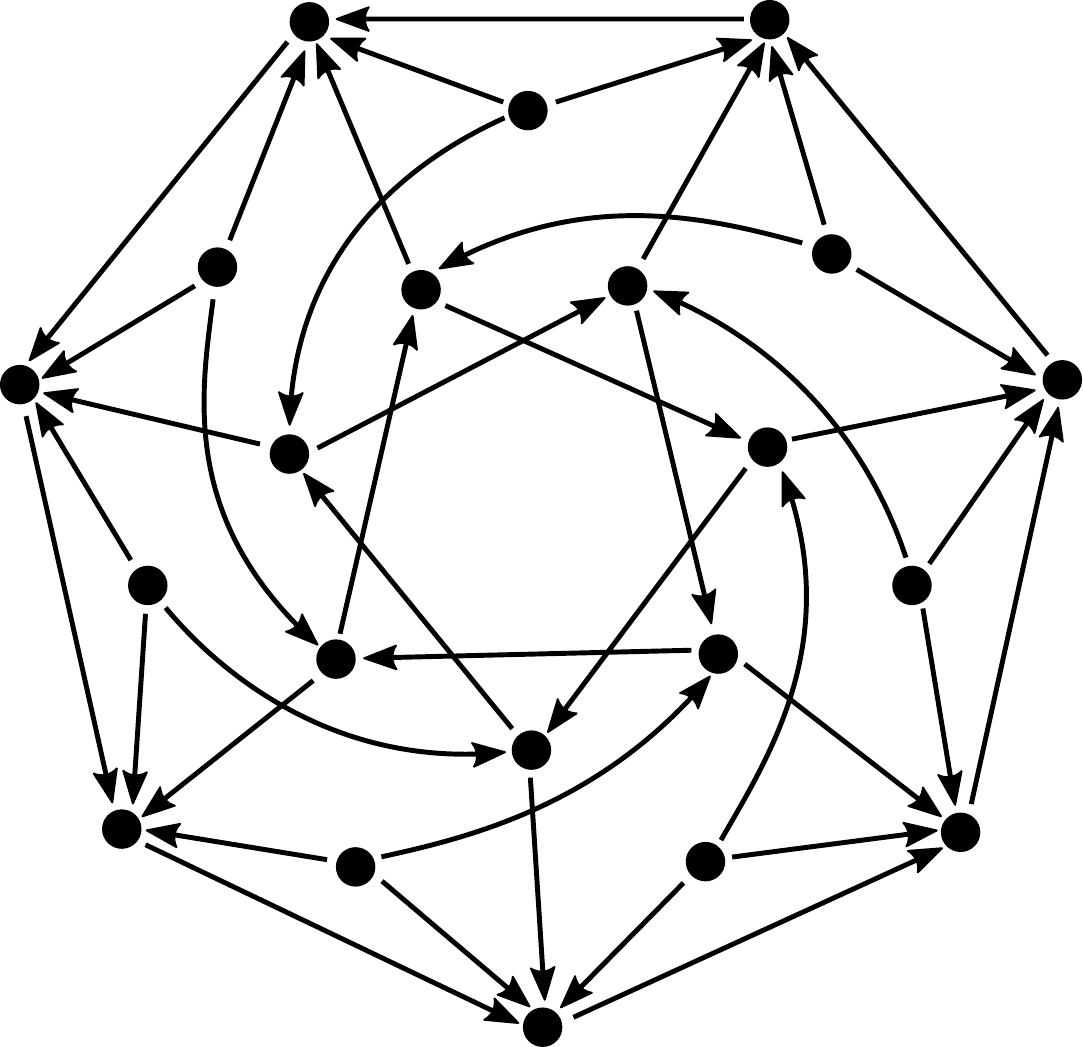}}

\put(52,67){$\xi_{1,3,5}$}
\put(11,67){$\xi_{3,5,7}$}
\put(-5,46){$\xi_{2,5,7}$}
\put(68,46){$\xi_{1,3,6}$}
\put(1,11){$\xi_{2,4,7}$}
\put(64,14){$\xi_{1,4,6}$}
\put(28,-1){$\xi_{2,4,6}$}

\put(50,48){$\xi_{1,3}$}
\put(16,51){$\xi_{5,7}$}
\put(33,57){$\xi_{3,5}$}
\put(54,27){$\xi_{1,6}$}
\put(9,33){$\xi_{2,7}$}
\put(42,15){$\xi_{4,6}$}
\put(22,14){$\xi_{2,4}$}

\put(21,47){$\xi_{3,7}$}
\put(34,50){$\xi_{1,5}$}
\put(47,41){$\xi_{3,6}$}
\put(46.5,28){$\xi_{1,4}$}
\put(27,21){$\xi_{2,6}$}
\put(15,35.5){$\xi_{2,5}$}
\put(15.5,24.8){$\xi_{4,7}$}

\end{picture}
\end{center}
\caption{\label{fig:seven1} The figure shows part of the resulting heteroclinic network between fixed points for the $(7,1)$-graph. Not shown are the $\xi_j$ fixed points and the heteroclinic connections from these.
}
\end{figure}

\subsection{Symmetric subcycles}

As can be seen from the examples given so far of the $(5,1)$-, $(6,1)$- and $(7,1)$-graphs, there can exist many different heteroclinic cycles within the heteroclinic network in phase space. As the number of nodes in the ring graph increases, so too will the number of heteroclinic cycles. In the following, we establish stability results for some of these subcycles. Although in theory our method can be used for any subcycle, in practise it is much easier to compute explicit stability conditions if the cycle is \emph{symmetric}. That is, if the cycle is between $n$ fixed points $\zeta_1,\dots,\zeta_n$, then there exist a symmetry $\rho=\sigma^M$ (for some $M$) such that $\zeta_j=\rho\zeta_{j-1}$ for all $j\in 2,\dots,n$ (and  $\zeta_1=\rho\zeta_n$).

Note that for both the $(6,1)$ and the $(7,1)$-graph, such cycles exist between the $\xi_j$ fixed points. For the $(7,1)$-graph, symmetric cycles also exist between the $\xi_{j,j+3}$ fixed points and the $\xi_{j,j+2,j+4}$ fixed points. However, for the $(6,1)$-graph, there is no symmetric cycle between fixed points with two non-zero components (see again figure~\ref{fig:six1}).

In this section we enumerate the possible symmetric cycles in $(\nnodes,1)$-graphs. Clearly, the symmetry requires the number of active coordinates at each fixed point to be the same, and additionally, that the spacing of the active coordinates around the ring is the same at each fixed point. This gives restrictions on the allowed spacing between the on nodes, as follows.

\begin{figure}
\setlength{\unitlength}{1mm}
\begin{center}
\begin{picture}(55,122)(-3,-5)

\put(-2,103){\rotatebox{-20}{$\left\{\color{white}{\begin{pmatrix} 0 \\ 0 \\ 0 \\ 0 \\ 0\end{pmatrix}} \right.$}}
\put(37,114){\rotatebox{223}{$\left\{\color{white}{\begin{pmatrix} 0 \\ 0 \\ 0 \\ 0\end{pmatrix}} \right.$}}
\put(26,66){\rotatebox{105}{$\left\{\color{white}{\begin{pmatrix} 0 \\ 0 \\ 0 \\ 0\end{pmatrix}} \right.$}}

\put(-3,104){$n_2$}
\put(36,64){$n_3$}
\put(49,114){$n_1$}

\put(0,67){\includegraphics[width=55mm]{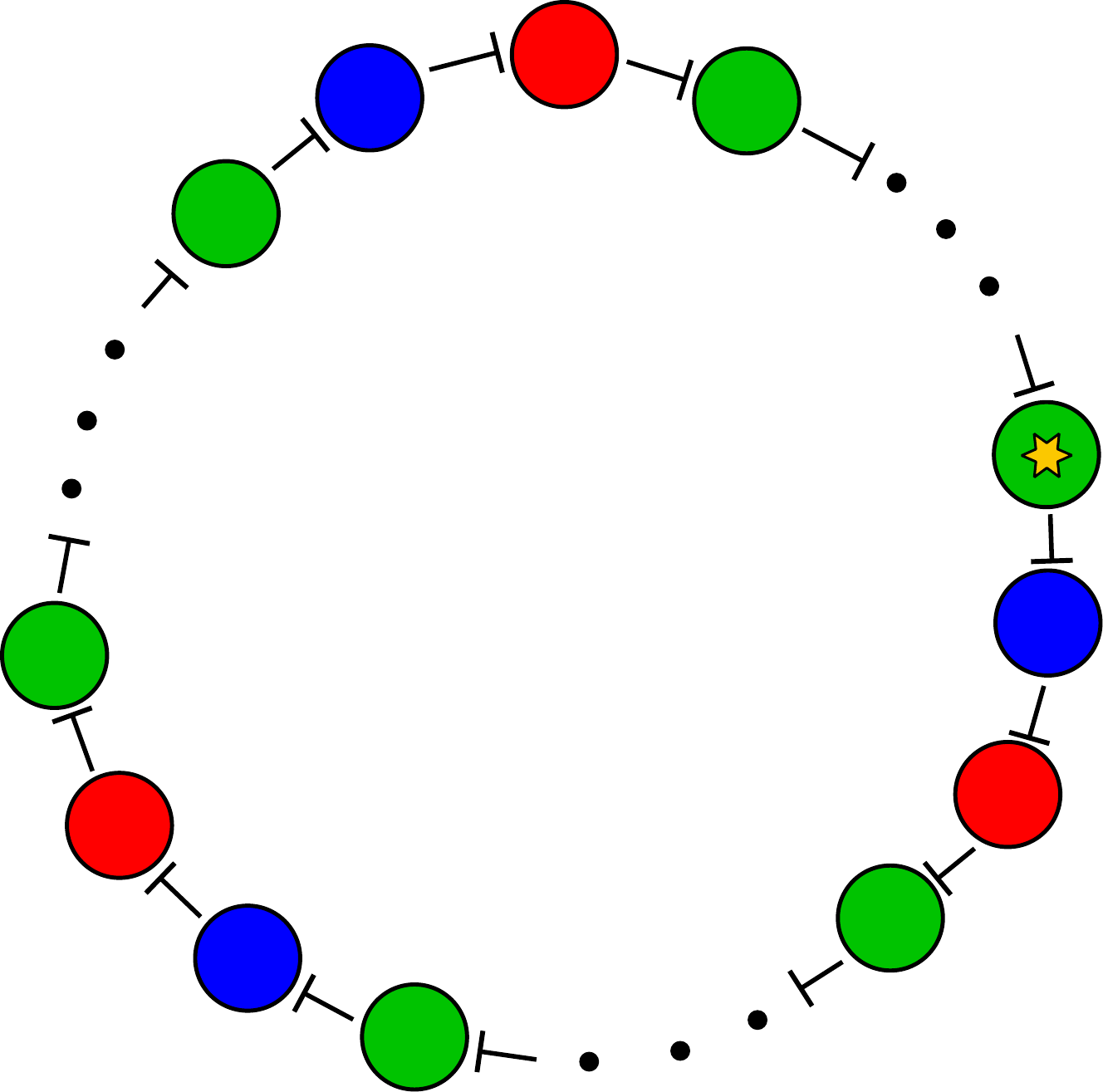}}

\put(-2,38){\rotatebox{-20}{$\left\{\color{white}{\begin{pmatrix} 0 \\ 0 \\ 0 \\ 0 \\ 0\end{pmatrix}} \right.$}}
\put(35,51){\rotatebox{230}{$\left\{\color{white}{\begin{pmatrix} 0 \\ 0 \\ 0 \\ 0\end{pmatrix}} \right.$}}
\put(25,3){\rotatebox{115}{$\left\{\color{white}{\begin{pmatrix} 0 \\ 0 \\ 0 \\ 0 \\ 0 \\ 0\end{pmatrix}} \right.$}}

\put(-3,39){$n_2$}
\put(38,1){$n_3+1$}
\put(48,51){$n_1-1$}

\put(0,2){\includegraphics[width=55mm]{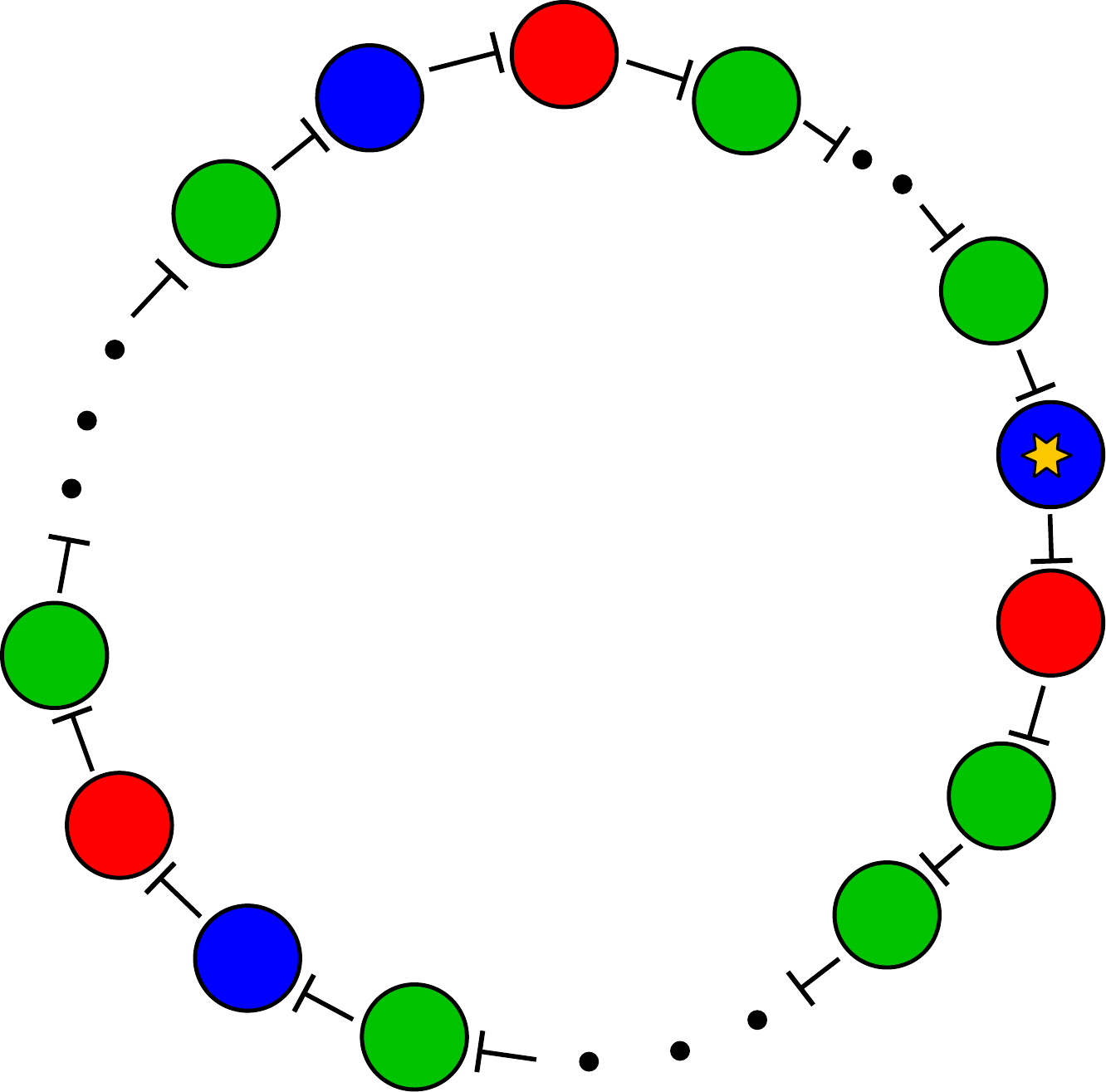}}

\put(-3,120){(a)}
\put(-3,50){(b)}

\end{picture}
\end{center}

\caption{\label{fig:spacing} The figure shows schematic diagrams of two fixed points in an $\nnodes$-ring graph with nearest-neighbour one-way connections. The blue nodes are active, the red nodes are supressed and the remainder are green. The fixed points are considered to be adjacent in a heteroclinic cycle, and the numbers (e.g.~$n_1$) indicate the number of green nodes between the red/blue pairs.}
\end{figure}

In figure~\ref{fig:spacing}(a),  we show an $n$-node ring graph, and suppose that there are three nodes which are active (coloured blue). The nodes coloured red are those which are being supressed by the blue nodes, and all others are coloured green. Suppose that there are $n_1$, $n_2$ and $n_3$ green nodes between each pair of blue and red nodes (as marked in the figure) (where $n_1\geq 1$, $n_2,n_3\geq 0$). In order for the next fixed point to have the same number of active components, the next node to reach $O(1)$ much be adjacent to a blue node: the heteroclinic connection must of type $3\rightarrow 3$.  Without loss of generality, we suppose that the next node to reach $O(1)$ is the node marked with a yellow star. The next fixed point in the cycle thus has nodes coloured as in figure~\ref{fig:spacing}(b) (the yellow star marks the same node). There are then two options for how we could rotate the arrangement at the second fixed point to match the arrangement at the first. Either (i) we rotate panel (b) anticlockwise by $n_1+1$ nodes, and  must therefore have
\[
n_1-1=n_2,\quad n_2=n_3, \quad n_3+1=n_1
\]
which implies $n_2=n_3=n_1-1$;
or, (ii), we rotate panel (b) clockwise by $n_1+1$ nodes, and then we must have
\[
n_1-1=n_3, \quad n_3+1=n_2, \quad n_2=n_1
\]
which implies $n_1=n_2=n_3+1$.

If there are instead $\jnodes>3$ nodes active, rather than just three, similar arguments can be made, and the results give the same two possible cases.
 In case (i), we write $n_1=p-1$, $n_2,\dots, n_\jnodes=p-2$, and the total number of nodes is $\nnodes=p\jnodes+1$, for some $p\geq 2$. In case (ii), we write $n_\jnodes=s-3$, $n_1,\dots,n_{\jnodes-1}=s-2$ and the total number of nodes is $\nnodes=s\jnodes-1$, for some $s\geq 3$.

In the case where there is only a single active node ($\jnodes=1$), then clearly all fixed points are symmetric. In the case where $\jnodes=2$, the same arguments apply as for $\jnodes=3$ or more, except that there is no distinction between cases (i) and (ii) above.

Note that in both cases (i) and (ii), the spacing between the active nodes is such that all gaps between active nodes are of equal length except one that is one greater or fewer than the others. In figure~\ref{fig:elevens} we show four possible types of fixed points in the graph with $\nnodes=11$, with two, three, four and five active nodes respectively. The fixed points with three (figure~\ref{fig:elevens}(b)) and four (figure~\ref{fig:elevens}(c)) active nodes  are in case (ii): one gap is smaller than the others. The fixed point with five active nodes  (figure~\ref{fig:elevens}(d)) is in case (i): one gap is bigger than the others.

The maximum value that $\jnodes$ can take (the maximum number of active nodes at a fixed point) is equal to $\nnodes/2$ if $\nnodes$ is even, and $(\nnodes-1)/2$ if $\nnodes$ is odd. We refer to these fixed points as \emph{maximally active} fixed points.

\begin{figure}
\setlength{\unitlength}{1mm}
\begin{center}
\begin{picture}(75,75)(0,0)
\put(0,40){\includegraphics[width=35mm]{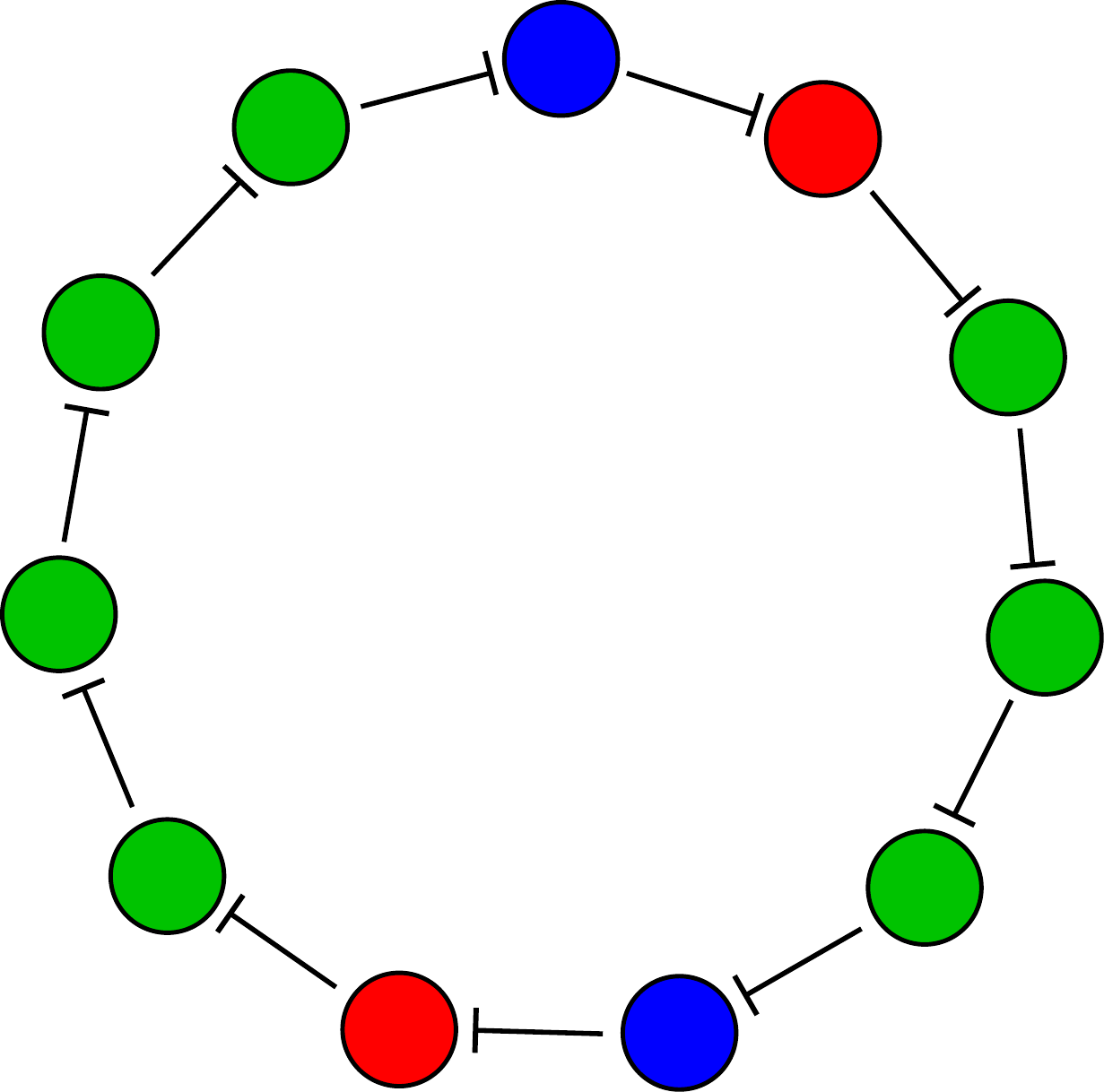}}
\put(40,40){\includegraphics[width=35mm]{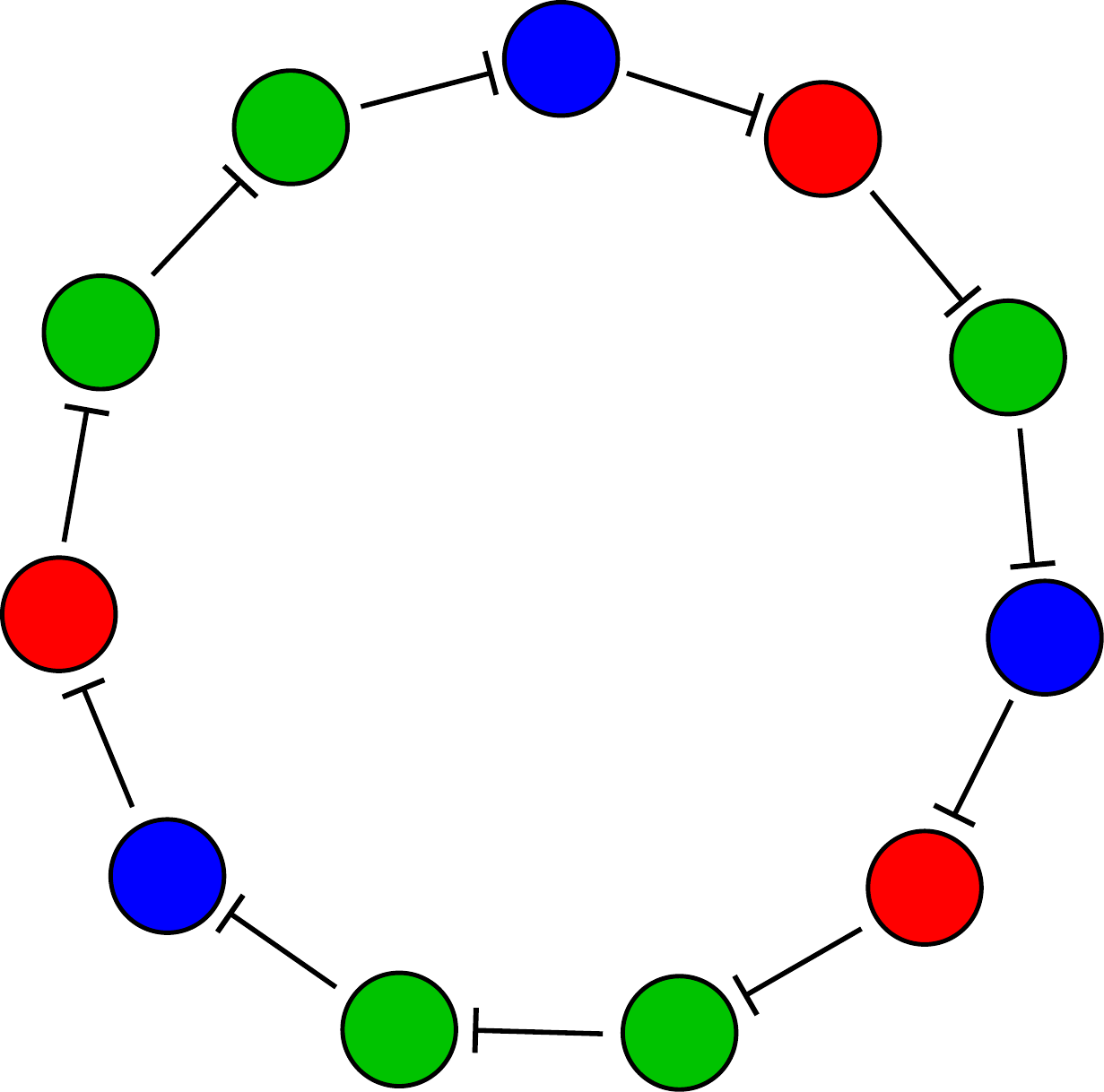}}

\put(0,0){\includegraphics[width=35mm]{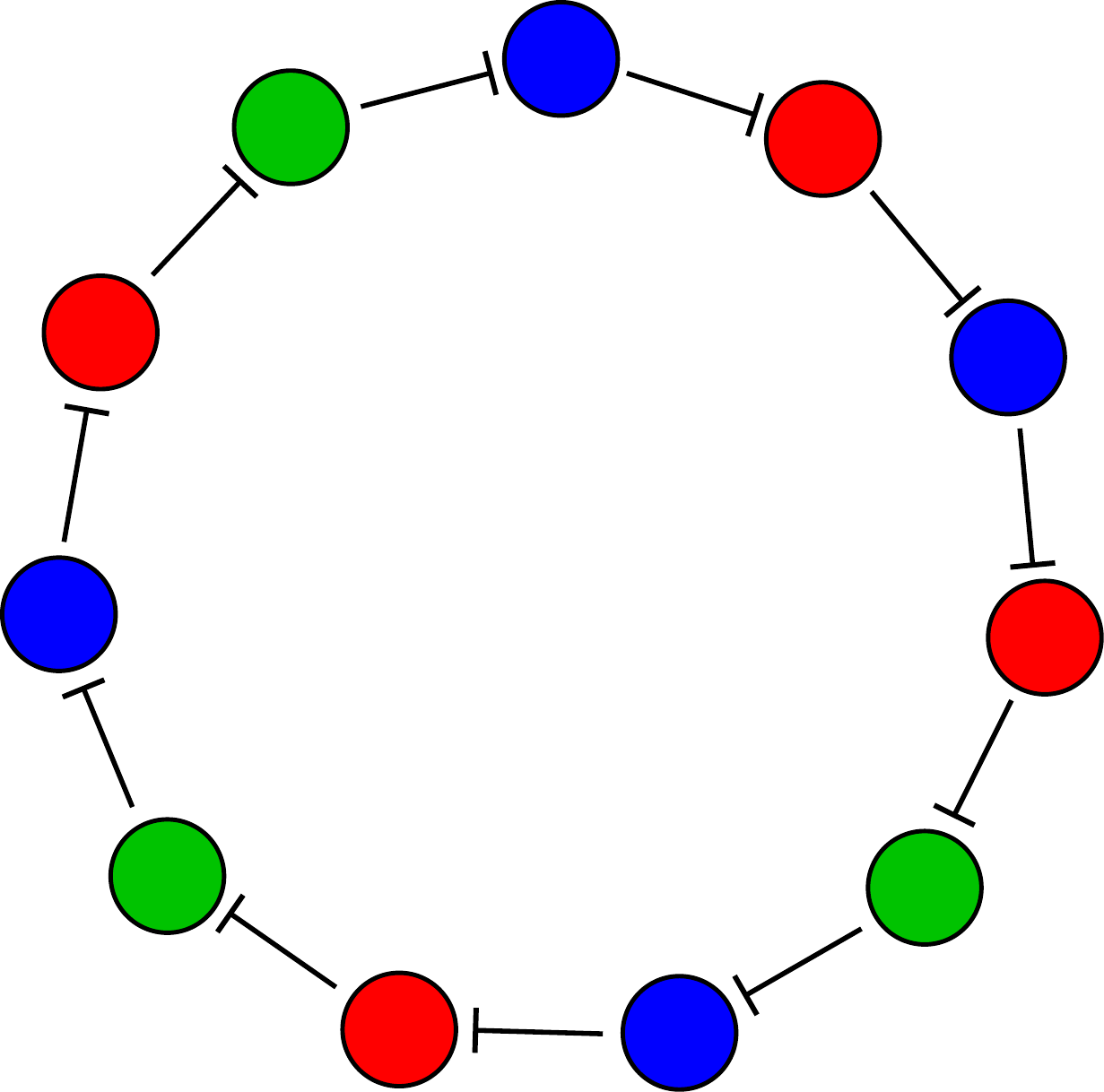}}
\put(40,0){\includegraphics[width=35mm]{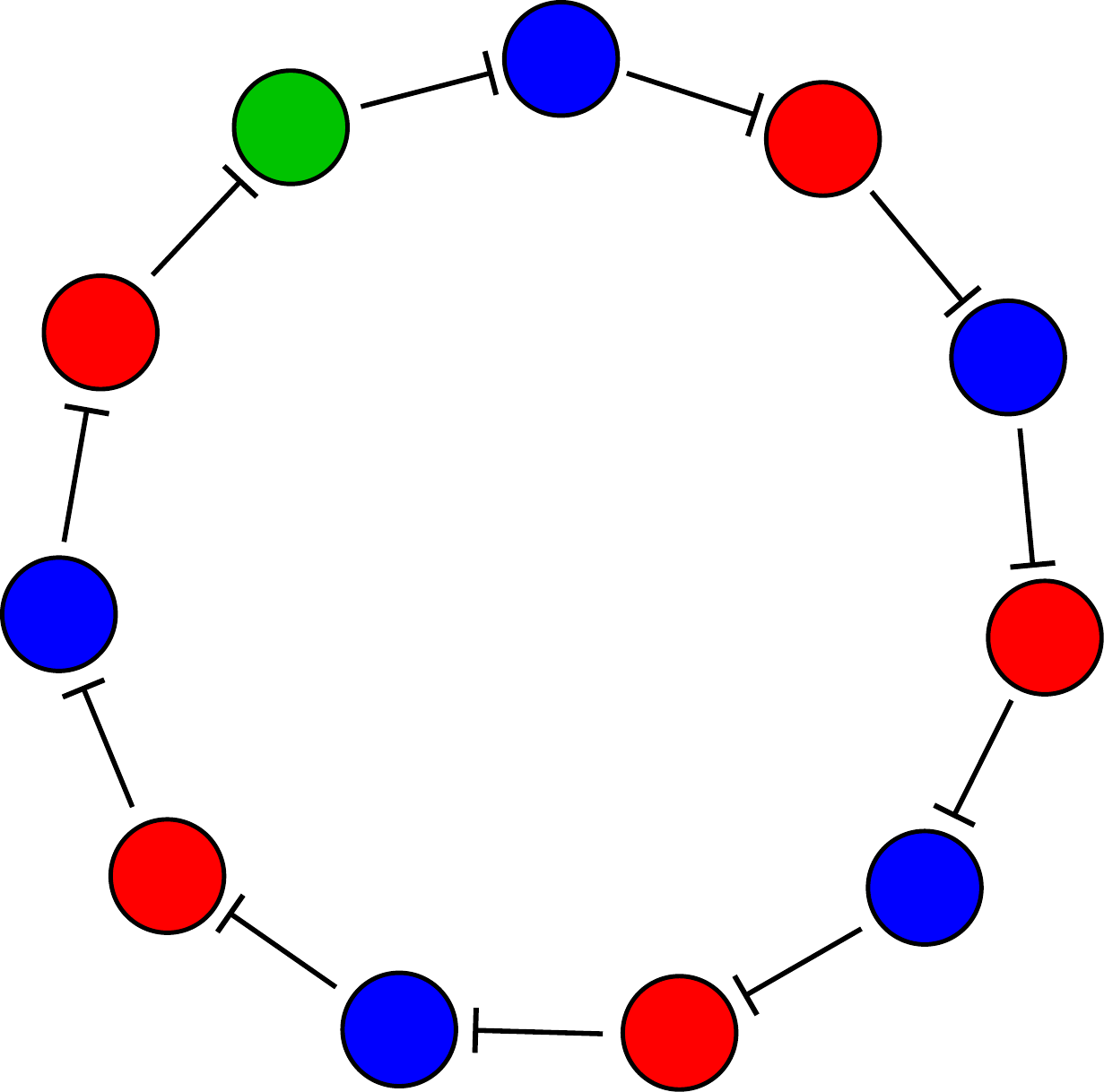}}

\put(0,72){(a)}
\put(40,72){(b)}
\put(0,32){(c)}
\put(40,32){(d)}

\end{picture}
\end{center}
\caption{\label{fig:elevens} The figures show fixed points in an eleven node network with (a) two, (b) three, (c) four, and (d) five active nodes respectively. Each of these fixed points has a spacing between the active nodes where all gaps are equal except one, that is one greater or fewer than the others.}
\end{figure}

\subsection{Construction and analysis of heteroclinic cycles}

We now specifically construct a heteroclinic connection between two fixed points, and use this construction to show how the stability of a heteroclinic cycle between fixed points can be computed. We first consider the dynamics within an \emph{epoch}, which we define to be the length of time a trajectory spends in a neighbourhood of a single fixed point. We then discuss how the trajectory transitions between epochs. This method echos the construction of Poincar\'e maps which is typical in analysis of heteroclinic cycles in continuous-time systems~\cite{kirk1994competition,krupa2004asymptotic,kirk2012resonance,podvigina2012stability}.

\subsubsection{Dynamics within one epoch}

\begin{figure}
\setlength{\unitlength}{1mm}
\begin{center}
\begin{picture}(60,80)(0,0)

\put(11,0){\includegraphics[width=35mm]{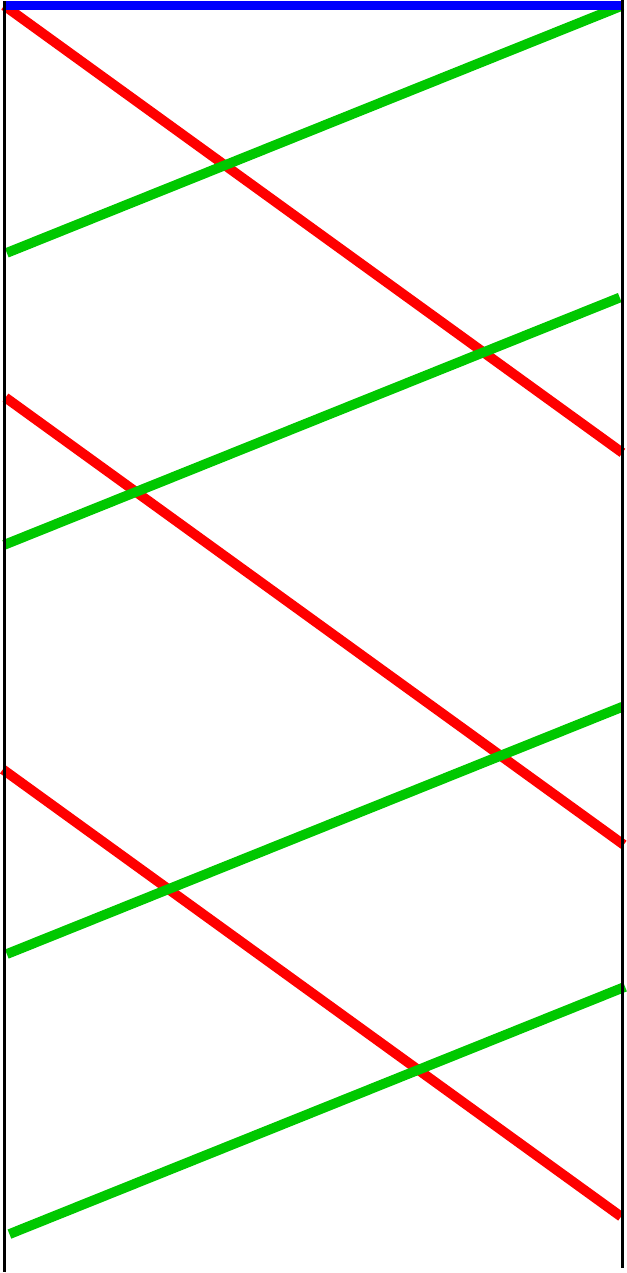}}

\put(3,70){$O(1)$}

\put(0,56){$\log x_e^{\iin}$}
\put(-2,49){$\log x_{t_{\jnodes-1}}^{\iin}$}
\put(0,40){$\log x_{s_1}^{\iin}$}
\put(0,28){$\log x_{t_{1}}^{\iin}$}
\put(0,18){$\log x_{s_2}^{\iin}$}
\put(0,1){$\log x_{s_\lnodes}^{\iin}$}

\put(48,54){$\log x_{s_1}^{\out}$}
\put(48,45){$\log x_c^{\out}$}
\put(48,32){$\log x_{s_2}^{\out}$}
\put(48,23){$\log x_{t_{\jnodes-1}}^{\out}$}
\put(48,15){$\log x_{s_\lnodes}^{\out}$}
\put(48,3){$\log x_{t_1}^{\out}$}


\put(30,0){\vector(1,0){10}}
\put(35,-2.5){$i$}

\end{picture}
\end{center}

\caption{\label{fig:schem_ts} The figure shows a schematic of a time series of one epoch of a trajectory close to a fixed point in system~\eqref{eq:nnc}.}
\end{figure}

Consider the dynamics of~\eqref{eq:nnc}, with $\nnodes$ nodes in the graph, near a fixed point at which $\jnodes$ nodes are active, that is, an equilibrium at which $\jnodes$ components are equal to $\hat{x}$. Then, by the arguments given in section~\ref{sec:hetconns}, there will be $\jnodes$ negative eigenvalues, and hence $\jnodes$ components which are decaying. Similarly, there will be $\nnodes-2\jnodes$ components which are growing. We give a schematic sketch of this in figure~\ref{fig:schem_ts}, and note that the only initial condition we have specified is that one of the decaying components starts at $O(1)$. We have further specified that one of the growing components reaches $O(1)$ at the end of the period of time shown in the figure. We label this period of time one \emph{epoch}, and note that after this epoch, the trajectory will be in the neighbourhood of a different fixed point, which may have the same, or one more, nodes that are active. 

We label each of the growing components, in order of largest to smallest initial conditions:
\[
x_e, x_{s_1},\dots, x_{s_\lnodes},
\]
where $\lnodes=\nnodes-2\jnodes-1$, and we label each of the decaying components, again in order of largest to smallest initial conditions:
\[
x_c, x_{t_{\jnodes-1}},x_{t_{\jnodes-2}},\dots,x_{t_1}.
\]
We use a superscript `in' and `out' to indicate the initial, and final, conditions for each of the components. Recall that each of the growing components grows at a rate $r>1$, and each of the decaying components decays at a rate $r\exp(-\gamma \hat{x})<1$. Let the number of iterations in the epoch shown in figure~\ref{fig:schem_ts} be $T$, and then we have
\[
\log(x_e^{\iin})+T\log r=O(1),
\]
or, assuming that $x_e^{\iin}\ll1$ and hence $T$ is large
\[
T=-\frac{\log(x_e^{\iin})}{\log r}+O(1).
\]
We can then use this expression for $T$ to compute the `out' coordinates of the other components in terms of the `in' components. Specifically, we find
\begin{align*}
\log(x_{s_i}^{\out})&=\log(x_{s_i}^{\iin})-\log(x_e^{\iin}),\quad  i=1,\dots,\lnodes,\\
\log(x_{t_i}^{\out})&=\log(x_{t_i}^{\iin})+\left(\frac{\gamma \hat{x}}{\log r}-1\right) \log(x_e^{\iin}),\quad  i=1,\dots,\jnodes-1, \\
\log(x_c^{\out})&=\left(\frac{\gamma \hat{x}}{\log r}-1\right) \log(x_e^{\iin}). 
\end{align*}

We write $\delta=\left(\frac{\gamma \hat{x}}{\log r}-1\right)$, and $X_c=\log(x_c)$ etc, and then have the following linear map from the `in' variables to the `out' variables:
\begin{equation}\label{eq:linmap1}
\begin{pmatrix}
X_{s_i}^{\out} \\
X_{t_i}^{\out} \\
X_c^{\out} 
\end{pmatrix}
=
\begin{pmatrix}
-1_{l} & I_{l} & 0 \\
\delta 1_{j-1} & 0 & I_{j-1} \\
\delta & 0 & 0
\end{pmatrix}
\begin{pmatrix}
X_{e}^{\iin} \\
X_{s_i}^{\iin} \\
X_{t_i}^{\iin}
\end{pmatrix}
\end{equation}
where $1_{m}$ is a length $m$ column vector of $1$'s, and $I_m$ is the $m\times m$ identity matrix.

\subsubsection{Transitions between epochs}

In this section, we discuss how to  map the `out' variables given in equation~\eqref{eq:linmap1} onto a new set of `in' variables for the next epoch.




In figure~\ref{fig:casea} we show how the nodes are labelled (as in the time series schematic shown in figure~\ref{fig:schem_ts}), for case (i). The node with the yellow star is $x_e$, as described above. The remaining growing nodes are $x_{s_1},\dots, x_{s_\lnodes}$, where $\lnodes=\nnodes-2\jnodes-1$. As per the labelling scheme in figure~\ref{fig:schem_ts}, $x_{s_1}$ is the component which will become $O(1)$ next in the sequence (following $x_e$), and by applying the necessary rotations between fixed points, it can be seen how this node is selected. Similar arguments explain how the other growing nodes are labelled. The labelling of the contracting nodes is done in a similar fashion: $x_c$ is the component which was $O(1)$ at the start of the epoch, and hence was blue in the previous fixed point: application of the rotation between panels (a) and (b) in figure~\ref{fig:spacing} gives us this label. The $x_{t_i}$ labels are found in a similar way.

This results, for case (i), in the following transformation between the `out' coordinates of the last fixed point, and the `in' coordinates of the next one:
\begin{align*}
x_{s_1}^{\out}& = x_e^{\iin} \\
x_{s_i}^{\out}& = x_{s_{i-1}}^{\iin}, \quad i=2,\dots, \lnodes,\\
x_{t_1}^{\out}& = x_{s_l}^{\iin}, \\
x_{t_{i}}^{\out} & = x_{t_{i-1}}^{\iin}, \quad i=2,\dots, \jnodes \\
x_{c}^{\out}& = x_{t_\jnodes}^{\iin}.
\end{align*}

\begin{figure}
\setlength{\unitlength}{1mm}
\begin{center}
\begin{picture}(74,68)(-6,-5)



\put(-1,40){\rotatebox{-20}{$\left\{\color{white}{\begin{pmatrix} 0 \\ 0 \\ 0\end{pmatrix}} \right.$}}
\put(48,53){\rotatebox{220}{$\left\{\color{white}{\begin{pmatrix} 0 \\ 0 \\ 0\end{pmatrix}} \right.$}}
\put(33,-1){\rotatebox{105}{$\left\{\color{white}{\begin{pmatrix} 0 \\ 0 \\ 0\end{pmatrix}} \right.$}}

\put(-6,41){$p-2$}
\put(41,-3){$p-2$}
\put(60,53){$p-1$}

\put(0,0){\includegraphics[width=65mm]{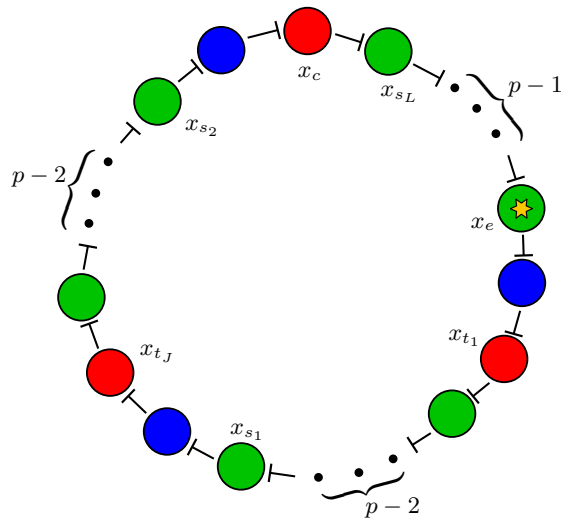}}

\put(55,35){$x_e$}
\put(32,55){$x_c$}
\put(52,20){$x_{t_1}$}
\put(11,18){$x_{t_\jnodes}$}
\put(23,7.5){$x_{s_1}$}
\put(17,48){$x_{s_2}$}
\put(43,52.5){$x_{s_{\lnodes}}$}



\end{picture}
\end{center}
\caption{\label{fig:casea} The figure shows labelling of the nodes in case (i).}
\end{figure}

Combing this with the linear map in~\eqref{eq:linmap1} we get that the logarithmic coordinates each epoch are transformed under the map
\begin{equation}
X\rightarrow MX \label{eq:linmap}
\end{equation}
where $M$ is called a  \emph{transition matrix} and is given by
\begin{equation}\label{eq:M}
M=\begin{pmatrix}
-1 & 1 & 0 & 0 & 0 & \dots & 0 & 0 \\
-1 & 0 & 1 & 0  & 0 & \dots & 0 & 0\\
\vdots & \vdots & \ddots & \ddots & \ddots  & &  & \vdots \\
-1 & 0 & \dots & 0 & 1 & 0 & \dots & 0\\
\delta & 0 & \dots & & 0 & 1& 0 & 0\\
\vdots & \vdots & &  & & \ddots & \ddots & \vdots \\
\delta & 0 & \dots & 0& 0& 0 & 0 & 1 \\
\delta & 0 & \dots & 0& 0 & 0 & 0& 0
\end{pmatrix}.
\end{equation}
The matrix $M$ is a $q\times q$-square matrix, where $q=\nnodes-\jnodes-1=\jnodes(p-1)$. There are $\lnodes=\nnodes-2\jnodes-1=\jnodes(p-2)$ rows starting with a $-1$ and $\jnodes$ rows starting with a $\delta$, and $1$'s on the upper diagonal. Here $\delta=\frac{\gamma \hat{x}}{\log r}-1$, as before.

In case (ii), the labelling of the coordinates can be computed in the same way (although the labelling turns out to be different), but the resulting map is exactly the same. That is,  $M$ is a  $q\times q$-square matrix, with $q=\nnodes-\jnodes-1=\jnodes(s-1)-2$, the number of  rows of $M$ starting with a $-1$ is $\lnodes=\nnodes-2\jnodes-1=\jnodes(s-2)-2$, the number of rows starting with a $\delta$ is still $\jnodes$. 

In the original $x$ coordinates, the map~\eqref{eq:linmap} has a fixed point at $x=0$, which corresponds to the heteroclinic cycle in the original system. 
Podvigina~\cite{podvigina2012stability} gives results on the stability of this heteroclinic cycle, dependent on properties of the eigenvalues and eigenvectors of the transition matrix. In the next section we give a brief heuristic argument explaining Podvigina's results, and then state the precise requirements for stability.

\subsection{Transition matrices and fragmentary asymptotic stability}

We begin this section with some formal definitions, referring back to a generic dynamical system of the form~\eqref{eq:genf}.

We define the  $\delta$-local basin of attraction of a set $H$, invariant under $f$,  as $\mathcal{B}_\delta(H)$:
\begin{equation}
\begin{split}
\mathcal{B}_\delta(H)=\{\vec{x}\in\R^n ~|~ | f^i(\vec{x}), H|<\delta\ \forall\ i\geq 0,\ \\ \mathrm{and}\ \lim_{i\rightarrow \infty} |f^i(\vec{x}),H|=0 \}.
\end{split}
\end{equation}

From Podvigina~\cite{podvigina2012stability}, we also have:
\begin{definition}\label{defn:fas}
An invariant set $H$ is \emph{fragmentarily asymptotically stable} if, for any $\delta>0$, 
\[
\mu(\mathcal{B}_\delta(X))>0,
\]
where $\mu$ is the Lebesgue measure of a set in $\R^n$.
\end{definition}

Now suppose that for a heteroclinic cycle $H$, we have derived a Poincar\'e map in logarithmic coordinates, as in the previous section, of the form
\[
X_{i+1}=M X_{i}
\]
Here the subscript index $i$ now counts \emph{epochs}, rather than individual iterations of the original map. Let $X_i$ have dimension $q$, so $M$ is an $q\times q$ matrix, and then we can write the initial condition $X_0$, in the basis of eigenvectors $v_j$ of $M$, i.e.
\[
X_0=\Sigma_{j=1}^{q} c_j v_j
\]
where the $c_j$ are scalars. Then we find
\[
X_i=\Sigma_{j=1}^{q} \lambda_j ^i c_j v_j
\]
where $\lambda_j$ is the eigenvalue corresponding to the eigenvector $v_j$. Let $\lambda_{\max}$ be the eigenvalue with largest absolute value, and then the leading order term of $X_i$ is
\[
X_i\approx  \lambda_{\max} ^i c_{\max} v_{\max}
\]
Recall that $X_i$ are logarithmic variables, so $x_i\rightarrow 0$ if $X_i\rightarrow -\infty$, that is, in order for the heteroclinic cycle to be stable, we require at least that $|\lambda_{\max}|>1$. However, in addition, the $X_i$ are required to stay real and negative, so additional conditions are required, namely that $\lambda_{\max}$ is real, and that all the entries in the eigenvector $v_{\max}$ are of the same sign. Podvigina shows that if these conditions are satisfied, then there exists an open set of initial conditions which remain close to the heteroclinic cycle for all time, more specifically, the heteroclinic cycle is \emph{fragmentarily asymptotically stable}.

\begin{lemma}[Adapted from Podvigina~\cite{podvigina2012stability}]\label{lemma:fas}
Let $M$ be a transition matrix for a heteroclinic cycle $H$. Let $\lambda_\mathrm{max}$ be the eigenvalue with largest absolute value of the matrix $M$, and $v_{\max}$ be the associated eigenvector. Suppose $\lambda_\mathrm{max}\neq 1$. Then $H$ is fragmentarily asymptotically stable if the following conditions hold:
\begin{enumerate}
\item $\lambda_\mathrm{max}$ is real
\item $\lambda_\mathrm{max}>1$
\item $v_{\mathrm{max}}^l v_{\mathrm{max}}^j>0$ for all $l,j$.
\end{enumerate}
\end{lemma}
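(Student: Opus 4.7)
The approach is to translate the three hypotheses into statements about the long-time behaviour of the linear map $X \mapsto MX$ derived in the previous subsection, and then identify an open cone of initial data that remains in the negative orthant and marches off to $-\infty$ componentwise. Returning to the original $x$-coordinates via $x = \e^{X}$, this cone corresponds to a set of positive Lebesgue measure in every $\delta$-neighbourhood of $H$ on which trajectories approach $H$, which is exactly the content of Definition~\ref{defn:fas}.

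For the dynamical step, I would expand an arbitrary initial condition in a Jordan basis of $M$,
\[
X_0 = c_{\max} v_{\max} + Y_0,
\]
where $Y_0$ lies in the sum of generalised eigenspaces of the eigenvalues other than $\lambda_{\max}$. Hypotheses (1) and (2) say that $\lambda_{\max}$ is real and strictly dominates every other eigenvalue in absolute value, so
\[
X_i = c_{\max}\lambda_{\max}^i v_{\max} + O\!\bigl(|\lambda_2|^{i}\, i^{k}\bigr),
\]
where $\lambda_2$ is the next-largest eigenvalue and $k$ absorbs any Jordan-block polynomial growth. Hypothesis (3) then lets me rescale so that every entry of $v_{\max}$ is strictly positive. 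Choosing $c_{\max}$ strictly negative and bounded away from zero, and $\|Y_0\|$ small relative to $|c_{\max}|$, defines an open cone $\mathcal{C}\subset\R^{q}$ on which the leading term dominates \emph{from the first iteration onward}, so that $X_i$ lies in the negative orthant for all $i\geq 0$ and each component tends to $-\infty$ geometrically. Pulling back through $x = \e^{X}$ converts $\mathcal{C}$ into a set of positive Lebesgue measure in every neighbourhood of the origin in $x$-space, giving $\mu(\mathcal{B}_\delta(H))>0$ for any prescribed $\delta$.

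The main obstacle is that the linear map $X \mapsto MX$ captures only the leading-order epoch-to-epoch dynamics: the derivation absorbed $O(1)$ corrections into the epoch length $T = -\log(x_e^{\iin})/\log r + O(1)$ and into the coordinate relabelling between epochs. Strictly speaking one must work with an affine perturbation $X \mapsto MX + \eta$, where $\eta$ is bounded but not infinitesimal. The plan is to observe that, because the dominant direction grows geometrically while $\eta$ is bounded, shrinking $\mathcal{C}$ by a fixed factor absorbs all such additive perturbations without destroying positivity of its Lebesgue measure. One also needs to verify that within each epoch only the intended coordinate $x_e$ reaches $O(1)$; this follows because hypothesis (3), applied to the dominant direction, keeps the ratios of the other logarithmic coordinates to $X_{e}$ uniformly bounded away from $1$, so every non-intended coordinate is still $o(1)$ when the intended one becomes $O(1)$. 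This is precisely the estimate underlying Podvigina's proof, and no sharper control is required for fragmentary asymptotic stability.
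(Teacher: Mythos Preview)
The paper does not actually prove this lemma: it gives only the heuristic eigenvector-expansion argument in the paragraphs preceding the statement and then cites Podvigina for the precise result. Your proposal is a fleshed-out version of exactly that heuristic---expand in a Jordan basis, use the real dominant eigenvalue and the sign condition on $v_{\max}$ to find an open cone in the negative orthant that is mapped into itself and driven to $-\infty$---so the approaches coincide.

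Where you go beyond the paper is in addressing the $O(1)$ additive error (treating the true epoch map as $X\mapsto MX+\eta$ with bounded $\eta$) and the within-epoch consistency check that only the intended coordinate reaches $O(1)$. Both points are genuine and your treatment is correct in outline: the geometric growth of the dominant component absorbs any bounded affine perturbation, and hypothesis~(3) keeps the non-expanding logarithmic coordinates a uniformly positive fraction below $X_e$. These are precisely the estimates Podvigina carries out; the paper itself does not attempt them.
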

Note that the last condition is equivalent to requiring all the entries of the eigenvector $v_{\mathrm{max}}$ to be non-zero and of the same sign.

\subsection{Stability calculations}

In this section, we will prove
\begin{theorem}\label{thm:stab_thm}
Let $\delta^*=\frac{q+1-\jnodes}{\jnodes}$. If $q=\jnodes$, then the corresponding heteroclinic cycle is f.a.s. if $\delta>\delta^*$, and is unstable otherwise. All heteroclinic cycles with $q>\jnodes$ are unstable.
\end{theorem}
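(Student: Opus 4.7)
The plan is to reduce the stability question to the spectrum of the transition matrix $M$ and split on whether $q=J$ or $q>J$: the first case will be handled by Cauchy's bound on the roots of polynomials with single-sign coefficients, and the second by Rouch\'e's theorem. I would first solve $Mv=\lambda v$ directly by setting $v_1=1$ and iterating $v_{i+1}=\lambda v_i+1$ for $i\le L$ and $v_{i+1}=\lambda v_i-\delta$ for $L<i<q$, closed by the wrap-around relation $\lambda v_q=\delta$; this immediately gives the characteristic polynomial
\[
\chi_M(\lambda)=\sum_{i=J}^{q}\lambda^i-\delta\sum_{i=0}^{J-1}\lambda^i,
\qquad
(\lambda-1)\chi_M(\lambda)=P(\lambda):=\lambda^{q+1}-(1+\delta)\lambda^J+\delta.
\]
Descartes' rule on $\chi_M$ delivers a unique positive real root $R$, and the identity $\chi_M(1)=(L+1)-J\delta$ places $R$ via $R>1\iff\delta>\delta^*$. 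The same recursion yields the closed form $v_{L+1+k}=(R^{L+1}-1)(R^J-R^k)/[(R-1)(R^J-1)]$ for $0\le k\le J-1$, which together with the geometric expressions for $v_1,\dots,v_{L+1}$ shows that the eigenvector at $R$ is entry-wise positive whenever $R>1$. Hence conditions~(2) and~(3) of Lemma~\ref{lemma:fas} hold automatically for the real root $R$.

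In the case $q=J$ (so $L=0$), the non-leading coefficients of $\chi_M$ are all of one sign, so Cauchy's theorem yields $|\mu|\le R$ with strict inequality for every root $\mu\ne R$; equivalently, $M$ is entry-wise non-negative, irreducible, and aperiodic via the self-loop $M_{11}=\delta>0$, and Perron--Frobenius gives the same conclusion together with the positive eigenvector. Lemma~\ref{lemma:fas} then delivers fragmentary asymptotic stability precisely when $R>1$, i.e.\ when $\delta>\delta^*=1/q$; for $\delta\le\delta^*$ the bound $|\lambda_{\max}|=R\le 1$ rules out f.a.s.\ and the cycle is unstable.

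In the case $q>J$ (so $L\ge 1$), I would apply Rouch\'e's theorem to the splitting $P=f+g$ with $f(\lambda)=\lambda^{q+1}-(1+\delta)\lambda^J=\lambda^J(\lambda^{L+1}-(1+\delta))$ and $g(\lambda)\equiv\delta$. The zeros of $f$ are a $J$-fold zero at the origin together with $L+1$ simple zeros on the circle $|\lambda|=(1+\delta)^{1/(L+1)}$, and a short calculation gives $\min_{|\lambda|=r}|f(\lambda)|=(1+\delta)r^J-r^{q+1}$ whenever $r^{L+1}<1+\delta$, so the Rouch\'e hypothesis $|g|<|f|$ on $|\lambda|=r$ is exactly $P(r)<0$. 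Since $P(1)=0$, $P'(1)=(L+1)-J\delta<0$ under $\delta>\delta^*$, and $R$ is the next positive zero of $P$, this strict inequality holds throughout $(1,R)$; Rouch\'e then produces $J$ zeros of $P$ in each disc $\{|\lambda|<r\}$ with $r\in(1,R)$, and passing $r\nearrow R$ yields $L+1$ zeros of $P$ with $|\lambda|\ge R$. One is $R$; since $\lambda=1$ lies strictly inside $|\lambda|<R$, the other $L$ are roots of $\chi_M$ distinct from $R$ with $|\mu|\ge R$, and since $L\ge 1$ at least one of them exists, which by uniqueness of the positive real root of $\chi_M$ must be either complex or negative real. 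No eigenvalue of maximum modulus can then satisfy all three of conditions~(1)--(3) of Lemma~\ref{lemma:fas}, so the cycle is unstable; the subcase $\delta\le\delta^*$ with $L\ge 1$ is immediate since then $R\le 1$ and condition~(2) already fails.

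The main technical obstacle is the Rouch\'e step: on the circle $|\lambda|=R$ itself the bound $|g|\le|f|$ is tight, with equality at the $L+1$ points $R\omega$ for $(L+1)$-th roots of unity $\omega$, so one cannot apply Rouch\'e directly on $|\lambda|=R$ and must instead work on strictly smaller radii and then take a limit. This passage depends on the sign analysis of $P$ on $[1,R]$, and it is precisely the condition $P'(1)<0$ that picks out the threshold $\delta^*=(q+1-J)/J$.
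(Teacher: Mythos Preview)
Your argument is correct and uses the same toolkit as the paper---Descartes' rule for the unique positive root $R$, Cauchy's bound for $q=J$, and Rouch\'e's theorem on $Q(\lambda)=\lambda^{q+1}-(1+\delta)\lambda^J+\delta$ for $q>J$---but your organisation is tighter. The paper splits $q>J$ into parity cases, first exhibiting an explicit negative root $r_-$ when $J$ is odd and $q$ even (and showing via the splitting $f=\lambda^{q+1}$, $g=-(1+\delta)\lambda^J+\delta$ that $r_-$ is dominant), and handling the remaining parities with the different splitting $f=-(1+\delta)\lambda^J$, $g=\lambda^{q+1}+\delta$. Your single choice $f=\lambda^{q+1}-(1+\delta)\lambda^J$, $g=\delta$ covers all parities at once and reduces the Rouch\'e hypothesis cleanly to $Q(r)<0$ on $(1,R)$. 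Your closed-form eigenvector is also more general than the paper's, which writes it only for $q=J$, and your Perron--Frobenius alternative in that case gives simplicity and strict dominance of $R$ directly, which the paper's bare appeal to Cauchy does not.

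Two loose ends to tidy. First, the limit $r\nearrow R$ gives $L$ extra roots of $\chi_M$ with $|\mu|\ge R$, not $|\mu|>R$; if all of them sat on $|\lambda|=R$ then $R$ itself would be a maximum-modulus eigenvalue satisfying all of Lemma~\ref{lemma:fas}'s conditions, contrary to your conclusion. A root of $Q$ on $|\lambda|=R$ forces $\mu^{q+1}=R^{q+1}$ and $\mu^J=R^J$ (equality in the triangle inequality), hence $\mu=R\omega$ with $\omega^{\gcd(J,q+1)}=1$; for the symmetric cycles here $q+1=J(p-1)+1$ or $J(s-1)-1$, so $\gcd(J,q+1)=1$ and $R$ is alone on its circle. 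Adding that one-line check closes the gap. Second, your dismissal of $\delta\le\delta^*$ with $q>J$ is phrased as ``$R\le1$ so condition~(2) fails'', but $\lambda_{\max}$ need not be $R$; the correct statement is that any eigenvalue satisfying conditions~(1) and~(2) of Lemma~\ref{lemma:fas} would have to be the unique positive real root $R\le1$, which is impossible.
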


Note that $\delta>\delta^*$ is equivalent to $\gamma>\frac{\log r}{\hat{x}}\frac{\nnodes-\jnodes}{\jnodes}$.

We prove theorem~\ref{thm:stab_thm} by presenting results about the eigenvalues of the matrix $M$. First note that the characteristic polynomial of $M$ is
\begin{equation}\label{eq:charpoly}
P(\lambda) = \lambda^q+\lambda^{q-1}+\dots +\lambda^\jnodes -\delta(\lambda^{\jnodes-1}+\dots+\lambda+1)=0
\end{equation}
(this follows, e.g. from Claim 1 in Postlethwaite and Dawes, 2010~\cite{postlethwaite2010resonance}, page 629), and recall that $q=\jnodes(p-1)$, $p \ge 2$, $\jnodes\ge 1$. To establish properties of the roots of the polynomial $P(\lambda)$ we will appeal to three classical results. 
\begin{theorem}[Descartes Rule of Signs]\label{thm:descartes}
For a polynomial with real coefficients, ordered by descending variable exponent, the number of positive roots  of the polynomial is either equal to the number of sign changes between consecutive (nonzero) coefficients, or is less than it by an even number. A root of multiplicity $k$ is counted as $k$ roots.
\end{theorem}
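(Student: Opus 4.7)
The plan is to prove this classical statement by induction on the number of positive real roots $Z(p)$, leveraging a key sublemma about how the number of sign changes behaves under multiplication by a positive linear factor.

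First I would reduce to the case $p(0)\neq 0$ by factoring out the largest power of $x$ dividing $p$; this affects neither the positive real roots of $p$ nor the sign-change count $V(p)$ of its coefficient sequence. Write $Z(p)$ for the number of positive real roots with multiplicity. The goal is then to show that $V(p)-Z(p)$ is a nonnegative even integer.

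The crux is the following sublemma: if $a>0$ and $p(x)=(x-a)q(x)$, then
\[
V(p)\geq V(q)+1\quad\text{and}\quad V(p)-V(q)\text{ is odd.}
\]
The parity claim is immediate: the constant term of $p$ equals $-a\,q(0)$, which is opposite in sign to $q(0)$, while the leading coefficients of $p$ and $q$ share a sign; since the parity of $V(\cdot)$ is determined by whether the leading and constant terms of a polynomial agree in sign, $V(p)$ and $V(q)$ must have opposite parity. For the inequality I would examine the coefficient expansion $c_i=b_{i-1}-a\,b_i$ in the product $(x-a)\sum b_i x^i$, partition the nonzero coefficients of $q$ into $V(q)+1$ maximal runs of constant sign, and argue that each transition between opposite-sign runs in $q$ forces a corresponding sign change in the coefficients of $p$, with the parity flip above supplying the one additional forced change beyond these. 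Equivalently, one can induct on $V(q)$, the base case $V(q)=0$ (all nonzero $b_i$ of a common sign, say positive) being settled by the observation that $p(0)=-a\,q(0)<0$ while the leading coefficient of $p$ is positive, forcing $V(p)\geq 1$.

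Once the sublemma is available, the main result follows at once. Factor $p(x)=(x-a_1)(x-a_2)\cdots(x-a_Z)\,r(x)$, where $a_1,\dots,a_Z>0$ are the positive real roots of $p$ counted with multiplicity and $r$ has none. Iterating the sublemma $Z$ times yields $V(p)\geq V(r)+Z$ together with $V(p)-V(r)\equiv Z\pmod 2$. Since $r$ has no positive real root, $r$ maintains constant sign on $[0,\infty)$, so $r(0)$ and the leading coefficient of $r$ have the same sign, which forces $V(r)$ to be even; in particular $V(r)\geq 0$. Combining, $V(p)-Z(p)\geq V(r)\geq 0$ and $V(p)-Z(p)\equiv V(r)\equiv 0\pmod 2$, as required. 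The main obstacle is the inequality $V(p)\geq V(q)+1$ in the sublemma; the parity step, the factorisation-and-iteration argument, and the treatment of the rootless factor $r$ are routine.
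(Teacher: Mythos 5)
The paper does not prove this statement at all: Descartes' Rule of Signs is invoked as a classical result (alongside the theorems of Cauchy and Rouch\'e) purely as a tool for Proposition 1, so there is no in-paper argument to compare yours against. Taken on its own terms, your proposal is the standard and correct proof. The reduction to $p(0)\neq 0$, the parity step (constant term of $(x-a)q$ is $-a\,q(0)$, leading coefficients agree, and the parity of the sign-change count is governed by whether the first and last nonzero coefficients agree in sign), the iteration over the positive roots, and the observation that the rootless factor $r$ keeps a constant sign on $(0,\infty)$ and hence has even $V(r)$ are all sound. The only place where the writing is thinner than a complete proof is the inequality $V\bigl((x-a)q\bigr)\geq V(q)+1$, which you rightly flag as the crux: the clean way to finish it is to let $m_0>m_1>\dots>m_{V(q)}$ be the top indices of the maximal constant-sign blocks of $q$ and check that the coefficients $c_{m_j+1}=b_{m_j}-a\,b_{m_j+1}$ of $p$ carry the sign $\epsilon_j$ of block $j$ (the two terms reinforce or the second vanishes, since $b_{m_j+1}$ lies in the opposite-signed block above or is zero), while $c_0=-a\,b_0$ carries sign $-\epsilon_{V(q)}$; this exhibits a subsequence of coefficients of $p$ with $V(q)+1$ alternations. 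With that detail supplied, the argument is complete.
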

Of the $q$ complex roots of $P(\lambda)$, exactly one, $r_+$, is real and positive, by theorem \ref{thm:descartes}. Next, given a polynomial
$$
f(x) = a_n x^n + a_{n-1}x^{n-1} + \ldots + a_1 x + a_0,
$$
with $a_n \ne 0$, define
\begin{eqnarray*}
f_+(x) &= |a_n| x^n + |a_{n-1}|x^{n-1} + \ldots + |a_1| x - |a_0|& \\
f_-(x) &= |a_n| x^n - |a_{n-1}|x^{n-1} - \ldots - |a_1| x - |a_0|, 
\end{eqnarray*}
and note that theorem \ref{thm:descartes} shows that $f_+(x)$ has exactly one real positive root, $\hat{f_+}>0$, and $f_-(x)$ has exactly one real positive root $\hat{f_-}>0$.

\begin{theorem}[Cauchy \cite{cauchy1829leccons}]\label{thm:cauchy}
All zeros $z$ of $f(x)$ lie in the annular region  
$$
\hat{f_+} \le | z | \le \hat{f_-}.
$$
\end{theorem}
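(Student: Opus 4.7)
The plan is to bound $|f(z)|$ from below by two real expressions whose sign structure on the positive real axis is pinned down by Descartes' rule (Theorem~\ref{thm:descartes}). The upper and lower bounds on $|z|$ will come from two applications of the triangle inequality, used in opposite directions so as to isolate the leading term $a_n z^n$ in one case and the constant term $a_0$ in the other.

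First I would record the sign structure of $f_+$ and $f_-$ on $(0,\infty)$. Each polynomial has real coefficients with exactly one sign change, so by Theorem~\ref{thm:descartes} each has a unique positive real root, namely $\hat{f_+}$ and $\hat{f_-}$. Evaluating at $0$ gives $f_\pm(0) = -|a_0| < 0$, and the positive leading coefficient gives $f_\pm(x) \to +\infty$ as $x\to\infty$. Hence $f_+(x) < 0$ for $x \in [0, \hat{f_+})$ and $f_+(x) > 0$ for $x > \hat{f_+}$, and analogously for $f_-$.

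For the upper bound, assume $|z| > \hat{f_-}$. The triangle inequality gives
\[
|f(z)| \;\ge\; |a_n||z|^n - \sum_{k=0}^{n-1}|a_k||z|^k \;=\; f_-(|z|) \;>\; 0,
\]
so $z$ cannot be a zero of $f$. For the lower bound, assume $0 < |z| < \hat{f_+}$; isolating $a_0$ instead and applying the triangle inequality to the remaining terms yields
\[
|f(z)| \;\ge\; |a_0| - \sum_{k=1}^n |a_k||z|^k \;=\; -f_+(|z|) \;>\; 0,
\]
again contradicting $f(z)=0$. The case $z=0$ is excluded because $\hat{f_+}>0$ forces $|a_0|>0$, whence $f(0)=a_0\ne 0$.

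The argument is entirely elementary and presents no real obstacle; the only mild subtlety is remembering to apply the triangle inequality in the appropriate direction for each bound, treating $a_n z^n$ as the ``dominant'' piece for large $|z|$ and $a_0$ as the dominant piece for small $|z|$. All the combinatorial work is already done by Descartes, which converts the question about complex roots of $f$ into a question about sign patterns of two real-coefficient auxiliary polynomials, each of which has a single positive root by construction.
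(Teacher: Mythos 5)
Your proof is correct. The paper does not actually prove this statement --- it is quoted as a classical result with a citation to Cauchy's 1829 \emph{Le\c{c}ons}, so there is no in-paper argument to compare against; your elementary derivation (two opposite applications of the triangle inequality, with the sign structure of $f_+$ and $f_-$ on $(0,\infty)$ pinned down by Descartes' rule exactly as the paper sets it up) is the standard one and fills the gap cleanly. The only hypothesis worth making explicit is $a_0\neq 0$, which you correctly observe is forced by the existence of the positive root $\hat{f_+}$.
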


We will also use
\begin{theorem}[Rouch\'e \cite{rouche1866memoire}]\label{thm:rouche}
Let $f$ and $g$ be functions analytic inside and on a simple closed contour $C$, and suppose $|g(z)| < |f(z)|$ on $C$. Then both $f$ and $f+g$ have the same number of zeros inside $C$ (with each zero counted as many times as its multiplicity). 
\end{theorem}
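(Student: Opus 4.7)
The plan is to invoke Lemma~\ref{lemma:fas} and verify (or fail) its three conditions by studying the characteristic polynomial $P(\lambda)$. My first step would be to record the factorisation $(\lambda-1)P(\lambda) = Q(\lambda) := \lambda^{q+1} - (1+\delta)\lambda^{\jnodes} + \delta$, which has the same shape for every $q\ge\jnodes$, so that the roots of $P$ are exactly the roots of $Q$ other than the spurious $\lambda=1$. Since $P(1) = q-\jnodes+1-\delta\jnodes$ and $P(0) = -\delta < 0$, Theorem~\ref{thm:descartes} gives a unique positive real root $r_+$ of $P$, and the sign of $P(1)$ immediately shows $r_+ > 1$ iff $\delta > \delta^*$; hence condition~2 of Lemma~\ref{lemma:fas} is equivalent to $\delta > \delta^*$.

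Next I would locate the modulus of every root of $Q$. For any $\lambda$ with $Q(\lambda)=0$, the triangle inequality gives $(1+\delta)|\lambda|^{\jnodes} = |\lambda^{q+1}+\delta| \le |\lambda|^{q+1}+\delta$, i.e.\ $Q(|\lambda|)\ge 0$. Applying Theorem~\ref{thm:descartes} to $Q$ shows its only positive real roots are $1$ and $r_+$, so $Q(r)\ge 0$ on the real half-line exactly when $r\in [0,\min(r_+,1)] \cup [\max(r_+,1),\infty)$. I would then apply Theorem~\ref{thm:rouche} on the circle $|\lambda|=r$ for some $r$ strictly between $\min(r_+,1)$ and $\max(r_+,1)$: taking $f(\lambda) = -(1+\delta)\lambda^{\jnodes}$ and $g(\lambda) = \lambda^{q+1}+\delta$, the fact that $Q(r)<0$ on this circle translates (via the same triangle-inequality bound) to $|g|<|f|$, so $Q$ has exactly $\jnodes$ roots inside $|\lambda|<r$ and therefore $q+1-\jnodes$ roots in $|\lambda|\ge r$.

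For $q=\jnodes$ this leaves a single outside root, which must lie in $[r_+,\infty) \cup \{1\}$. When $\delta>\delta^*$ it is necessarily $r_+$, making $r_+$ the unique maximum-modulus eigenvalue; when $\delta<\delta^*$ it is $\lambda=1$, so every root of $P$ has $|\lambda|<1$ and condition~2 fails. To verify condition~3 in the f.a.s.\ case I would exploit the companion-like form of $M$ to write the eigenvector components as $v_i = v_1\, Q_i(r_+)/(r_+-1)$ with $Q_i(r) := r^i-(1+\delta)r^{i-1}+\delta$. A short calculation gives $Q_{i+1}(r_+)-Q_i(r_+) = r_+^{i-1}(r_+-1)(r_+-1-\delta)$, which I would show is strictly negative using the pleasant identity $P(1+\delta)=1$ (forcing $r_+<1+\delta$); combined with $Q_1(r_+)=r_+-1>0$ and $Q_{\jnodes+1}(r_+)=Q(r_+)=0$, monotonicity yields $Q_i(r_+)>0$ for $i=1,\dots,\jnodes$, so all $v_i$ share the sign of $v_1$.

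For $q>\jnodes$ the Rouch\'e count leaves at least two outside roots of $Q$. After accounting for $\lambda=1$, and also for $r_+$ itself when $\delta>\delta^*$, there remains at least one further root of $P$ in the outer region; by uniqueness of the positive real root of $P$ this extra root must be complex or negative, and the modulus constraint forces $|\lambda|\ge r_+$ (strict when $\delta<\delta^*$, because the outer region then lies in $[1,\infty)$ while $r_+<1$). Hence a non-real-positive eigenvalue matches or dominates $r_+$, condition~1 of Lemma~\ref{lemma:fas} fails, and no cycle with $q>\jnodes$ is f.a.s. The hardest step, I expect, will be the eigenvector positivity in the $q=\jnodes$ regime: the monotonicity argument itself is short, but it hinges on the algebraic miracle $P(1+\delta)=1$ (which is specific to $q=\jnodes$ and comes out of a telescoping geometric sum), and ensuring that the Rouch\'e count is genuinely sharp enough to pin down $r_+$ as the unique dominant root rather than merely bounding multiplicities will require careful bookkeeping.
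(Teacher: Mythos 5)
Your proposal does not prove the statement in question. The statement is Rouch\'e's theorem itself --- the classical complex-analysis result that if $|g|<|f|$ on a simple closed contour $C$ then $f$ and $f+g$ have the same number of zeros inside $C$. What you have written is instead a proof sketch of the paper's stability results (essentially Proposition~\ref{prop:roots} together with Theorem~\ref{thm:stab_thm}), and it explicitly \emph{invokes} Theorem~\ref{thm:rouche} as a tool in its second paragraph. As an argument for the stated theorem this is circular: you assume the very result you were asked to establish. A genuine proof of Rouch\'e's theorem goes through the argument principle: the number of zeros of an analytic function $h$ inside $C$ equals $\frac{1}{2\pi i}\oint_C h'(z)/h(z)\,dz$, and the hypothesis $|g|<|f|$ on $C$ guarantees that $f+tg$ is nonvanishing on $C$ for all $t\in[0,1]$, so this integer-valued winding number is continuous, hence constant, in $t$; evaluating at $t=0$ and $t=1$ gives the claim. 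Note that the paper itself offers no proof of this theorem --- it is quoted with a citation to Rouch\'e's memoir --- so there is nothing in the paper for your argument to be compared against on this point.

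For what it is worth, the material you did write is a reasonable reorganisation of the paper's root-counting argument: your observation that $Q(r)<0$ for real $r$ between $\min(r_+,1)$ and $\max(r_+,1)$ directly yields $|g(\lambda)|\le r^{q+1}+\delta<(1+\delta)r^{\jnodes}=|f(\lambda)|$ on $|\lambda|=r$ is cleaner than the paper's explicit $\epsilon$-estimates, and your eigenvector-positivity argument via $Q_i(r_+)$ is a legitimate alternative to the paper's explicit formula for $v_{\max}$. But none of that addresses the theorem you were asked to prove.
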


We will prove
\begin{proposition}\label{prop:roots}
The polynomial $P(\lambda)$ (with $\delta>0$) has roots satisfying the following:
\begin{enumerate}
\item When $q=\jnodes$, $r_+>1$ is the root of largest magnitude.
\item When $q>\jnodes$ we have the following cases:
\begin{enumerate}
\item If $\jnodes$ is odd and $q$ is even, the root of largest magnitude is real and negative. Call this root $r_-$; then there are $q-1$ roots inside $|\lambda|=r_-$, one of which is $r_+$, and $q-2$ of which are complex. 
\item If $\jnodes$ and $q$ are both even, or if $q$ is odd, then there are $\jnodes-1$ roots inside $|\lambda|=\max \{r_+,1\}$, of which exactly one, $r_-$, is real if and only if $\jnodes$ is even, and there are $q-\jnodes$ complex roots outside $|\lambda|=r_+$.
\end{enumerate}
\end{enumerate}
\end{proposition}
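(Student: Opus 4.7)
The cornerstone of the argument is the algebraic identity $(\lambda - 1)P(\lambda) = Q(\lambda)$, where $Q(\lambda) = \lambda^{q+1} - (1+\delta)\lambda^{\jnodes} + \delta$; this is verified by direct telescoping of the geometric sums defining $P$. Since $Q(1) = 0$, the roots of $P$ are precisely the roots of the simpler trinomial $Q$ other than $\lambda = 1$, and I would work with $Q$ throughout.

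The first step is to enumerate real roots by Theorem~\ref{thm:descartes}. Applied to $P$, the single sign change yields $r_+$, and the sign of $P(1) = q+1-\jnodes - \delta \jnodes$ decides whether $r_+ > 1$ (this occurs exactly when $\delta > \delta^*$). Applied to $Q(-\lambda) = (-1)^{q+1}\lambda^{q+1} - (1+\delta)(-1)^{\jnodes}\lambda^{\jnodes} + \delta$, the four parity cases yield at most one sign change, establishing existence or non-existence of the negative real root $r_-$ exactly as claimed in each sub-case.

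Modulus estimates come from Theorem~\ref{thm:cauchy} applied to $Q$. Substituting $\lambda = r_-$ into $Q(r_-) = 0$ and tracking the signs of $r_-^{q+1}$ and $r_-^{\jnodes}$ under the parities of case 2(a) shows that the Cauchy lower companion $Q_-$ satisfies $Q_-(|r_-|) = 0$, so the outer Cauchy bound $\hat Q_-$ equals $|r_-|$ and is attained by $r_-$; this identifies $r_-$ as the global modulus maximum in case 2(a). Similarly, in case 2(b) with $\jnodes$ even the analogous computation (with different parity of $q$) places $|r_-|$ at the inner Cauchy bound $\hat Q_+ < 1$. Interior counts are then obtained from Theorem~\ref{thm:rouche} with $f = -(1+\delta)\lambda^{\jnodes}$ and $g = \lambda^{q+1}+\delta$: on $|\lambda|=R$ the hypothesis $|g|<|f|$ is implied by the real inequality $Q(R) < 0$, and in that event $Q$ has exactly $\jnodes$ roots in $|\lambda| < R$ (the multiplicity of $f$ at the origin). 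Because $Q(R) = (R-1)P(R)$, the set $\{R>0 : Q(R)<0\}$ is the open interval between $\min(1,r_+)$ and $\max(1,r_+)$. Taking $R$ just below $\max(1,r_+)$ places exactly $\jnodes - 1$ roots of $P$ (excluding $r_+$) strictly inside $|\lambda| = \max(1,r_+)$. In case 1 ($q = \jnodes$) this exhausts the non-$r_+$ roots and $r_+$ is therefore the modulus-largest root; in case 2 the Rouché count, combined with the Cauchy bounds and the Descartes inventory of real roots, distributes the remaining $q - \jnodes$ roots across the annulus and forces them to be complex exactly as asserted.

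The main obstacle I anticipate is closing the boundary gap: Cauchy's theorem allows, a priori, several roots on the outer boundary $|\lambda| = \hat Q_-$, and one must rule out additional complex roots lying there. Equality in the triangle inequality underlying Cauchy's bound forces such a boundary root $z$ to satisfy both $z^{\jnodes}$ and $z^{q+1}$ negative real, giving the simultaneous system $\jnodes\,\arg(z) \equiv \pi$ and $(q+1)\arg(z) \equiv \pi$ modulo $2\pi$. Because $q = \jnodes(p-1)$ implies $q+1 \equiv 1 \pmod{\jnodes}$, hence $\gcd(q+1,\jnodes) = 1$, this system has the unique solution $\arg(z) = \pi$, pinning the boundary root down to $r_-$ exactly. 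An analogous arithmetic check---resting on the same coprimality---handles the position of $r_-$ in case 2(b) and rules out extra boundary roots on $|\lambda| = r_+$.
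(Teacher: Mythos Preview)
Your approach is correct and runs closely parallel to the paper's: both hinge on the same factorisation $(\lambda-1)P(\lambda)=Q(\lambda)$ and the same three classical results. The organisation differs in two places worth noting.

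For case 2(a) the paper does not use Cauchy on $Q$ at all: it applies Rouch\'e with $f(\lambda)=\lambda^{q+1}$ and $g(\lambda)=-(1+\delta)\lambda^{\jnodes}+\delta$ on a circle of radius $(1+\epsilon)|r_-|$, which places all $q+1$ roots strictly inside and so identifies $r_-$ as the maximal root with no boundary analysis required. Your route---Cauchy on $Q$ to pin the outer radius at $|r_-|$, then a $\gcd(q+1,\jnodes)=1$ argument to exclude other boundary roots---is valid in the paper's setting (the coprimality also holds in case~(ii), where $q+1\equiv -1\pmod{\jnodes}$, though you cite only case~(i)), but the paper's Rouch\'e step is shorter and works for arbitrary $q>\jnodes$ without any arithmetic side condition.

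For case~1 the paper observes that when $q=\jnodes$ one has $P=P_-$, so Cauchy applied directly to $P$ gives $|\lambda|\le r_+$ immediately, for all $\delta>0$. Your unified Rouch\'e argument, taking $R$ just below $\max(1,r_+)$, only traps the non-$r_+$ roots strictly inside $r_+$ when $r_+>1$; when $r_+<1$ that choice of $R$ merely shows all roots lie inside the unit circle. The fix is easy---take $R$ just \emph{above} $\min(1,r_+)$ instead---but as written there is a small gap.

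Conversely, your observation that the Rouch\'e hypothesis for $f=-(1+\delta)\lambda^{\jnodes}$, $g=\lambda^{q+1}+\delta$ reduces exactly to $Q(R)<0$, i.e.\ $R\in(\min(1,r_+),\max(1,r_+))$, is a clean replacement for the paper's explicit $\epsilon$-bookkeeping in case~2(b).
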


\begin{proof} 

\begin{enumerate}
\item  First, in the case $q=\jnodes$,  $P(\lambda) = P_-(\lambda)$, and so by theorem \ref{thm:cauchy},  all roots of $P(\lambda)$ are bounded in magnitude by $r_+$. Moreover, $r_+>1$ when $\delta > \delta^*$. 

\item When $q>\jnodes$, theorem \ref{thm:cauchy} is no longer of use. It is convenient to study the related polynomial
$$
Q(\lambda) = (\lambda-1)P(\lambda) = \lambda^{q+1} - (1+\delta)\lambda^\jnodes + \delta,
$$
which has the same roots as $P(\lambda)$, plus a root at $\lambda=1$.  Considering $Q'(\lambda)=(q+1)\lambda^q-\jnodes(1+\delta)\lambda^{\jnodes-1}$, and recalling that a double root of a polynomial is also a root of the polynomial's derivative, we see that $r_+=1$ when $\delta = \delta^* = (q+1-\jnodes)/\jnodes$, and that 
\begin{equation}\label{eq:rplus}
Q'(r_+) = (q+1)r_+^q - \jnodes(1+\delta)r_+^{\jnodes-1} >0 
\end{equation}
when $\delta>\delta^*$ (and $Q'(r_+)<0$ when $\delta<\delta^*$). 
\begin{enumerate}
\item In the case $\jnodes$ odd and $q$ even we have
\begin{eqnarray*}
Q(-\lambda) &=& -\lambda^{q+1} + (1+\delta)\lambda^\jnodes + \delta\\
&=& -Q(\lambda) + 2\delta,
\end{eqnarray*}
so $Q(-r_+) = -Q(r_+) + 2\delta = 2\delta >0$. Since $Q(\lambda) \to -\infty$ as $\lambda \to -\infty$, there must exist a real root of $Q(\lambda)$ between $-\infty$ and $-r_+$. Thus $Q(\lambda)$, and hence also $P(\lambda)$, has a real negative root, $r_-$, of greater magnitude than $r_+$. The same argument applies to $Q(-1)$ so $r_-$ is also of greater magnitude than the root at $\lambda=1$. To show that (in this case) $r_-$ is the root of largest magnitude, we apply theorem \ref{thm:rouche}, setting $f(\lambda) = \lambda^{q+1}$ and $g(\lambda) = -(1+\delta)\lambda^\jnodes + \delta$. We have, on the circle of radius $|\lambda| = (1+\epsilon)|r_-|$, 
\begin{eqnarray*}
|f(\lambda)| &=& |(r_-(1+\epsilon))^{q+1}| \\
& > & | (1+\delta)(r_-(1+\epsilon))^\jnodes | + \delta \\
&\ge & | (1+\delta)(r_-(1+\epsilon))^\jnodes  - \delta | \\
& = & | g(\lambda) |, 
\end{eqnarray*}
where the first inequality follows since $|r_-^{q+1}|=|(1+\delta)r_-^\jnodes|+\delta$, and the second by the triangle inequality. Then by theorem \ref{thm:rouche}, $Q(\lambda) = f(\lambda)+g(\lambda)$ has the same number of complex zeros inside that circle as $f(\lambda)$, namely $q+1$. Hence $r_-$ is the root of largest modulus.

\item In the other cases we will again apply theorem \ref{thm:rouche}. First consider $\delta<\delta^*$, so $r_+<1$ and take $f(\lambda) = -(1+\delta)\lambda^\jnodes$ and $g(\lambda) = \lambda^{q+1}+\delta$. For $\epsilon < 2(1+q-\jnodes(1+\delta))/(q(q+1))$ (observing that $\epsilon>0$ since $\delta<\delta^*$) we have, on the circle $|\lambda| = 1-\epsilon$,
\begin{eqnarray*}
|f(\lambda)| &=& |-(1+\delta)(1-\epsilon)^\jnodes| \\
&>&(1+\delta)(1-\jnodes\epsilon) \\
&=& 1+\delta + \epsilon(-\jnodes(1+\delta)) \\
&>& 1+\delta + \epsilon( -(q+1)+\epsilon q(q+1)/2) \\
&=& 1-\epsilon (q+1) + \epsilon^2 q(q+1)/2 + \delta \\
&>& (1-\epsilon)^{q+1} + \delta \\
&>& | (1-\epsilon)^{q+1} + \delta | \\
&=& | g(\lambda) |
\end{eqnarray*}

Similarly, if $\delta>\delta^*$, so $r_+>1$, we inspect $f(\lambda)$ and $g(\lambda)$ on the circle $|\lambda| = (1-\epsilon)r_+$, with $\epsilon < 2(1+q-\jnodes(1+\delta)/r^{q+1-\jnodes})/q(q+1)$ (which is again positive by (\ref{eq:rplus})). We have, similarly,
\begin{eqnarray*}
|f(\lambda)| &=& |-(1+\delta)(1-\epsilon)^\jnodes r_+^\jnodes| \\
&>&(1+\delta)(1-\jnodes\epsilon) r_+^\jnodes\\
&=& r_+^{q+1}+\delta + \epsilon r_+^\jnodes (-\jnodes(1+\delta)) \\
&>& (1-\epsilon)^{q+1}r_+^{q+1} + \delta \\
&>& | (1-\epsilon)^{q+1}r^{q+1} + \delta | \\
&=& | g(z) |.
\end{eqnarray*}
\end{enumerate}
Theorem \ref{thm:rouche} implies that $Q(\lambda)$ has $\jnodes$ roots inside the circle $|\lambda | = 1-\epsilon$ (resp. $|\lambda | = (1-\epsilon)r_+)$ if $\delta<\delta^*$ (resp. $\delta > \delta^*)$, and so $P(\lambda)$ has $\jnodes-1$ roots inside those circles. 
\end{enumerate}

\end{proof}


\begin{proof}[Proof of theorem \ref{thm:stab_thm}]
When $q=\jnodes$, proposition~\ref{prop:roots} shows that $\lambda_{\max}$ for the matrix $M$ is real and is $>1$ when $\delta$ is sufficiently large (when $\delta>1/\jnodes$). Moreover an easy calculation shows that the corresponding eigenvector $v_{\max}$ is given by
\begin{equation}
v_{\max}=\begin{pmatrix}
1\\
\frac{\delta}{r} \left( \sum_{k=0}^{\jnodes-2} \frac{1}{r^k}  \right) \\
\frac{\delta}{r} \left( \sum_{k=0}^{\jnodes-3} \frac{1}{r^k}  \right)\\
\vdots \\
\frac{\delta}{r} \left( 1+ \frac{1}{r} \right) \\
\frac{\delta}{r}
\end{pmatrix},
\end{equation}
which clearly has all entries non-zero and the same sign, and lemma \ref{lemma:fas} confirms that this state is fragmentarily asymptotically stable. 

When $q>\jnodes$ the conditions of lemma \ref{lemma:fas} are not met, as $\lambda_{\max}$ is no longer real and positive.

\end{proof}



\subsection{Appearance of instabilities}

As described in the section above, many of the heteroclinic cycles we find are unstable. However, if initial conditions are carefully chosen, then the cycles can be observed for reasonably long times in numerical simulations. Specifically, suppose that a heteroclinic cycle $H$ has transition matrix $M$, with eigenvalues and corresponding eigenvalues $\lambda_i$ and $v_i$. Suppose that the heteroclinic cycle is unstable, so that the eigenvalue with largest magnitude, which without loss of generality we assume to be $\lambda_1$, does not satisfy the conditions of lemma~\ref{lemma:fas}. Assume further that $\lambda_2$ \emph{does} satisfy the conditions of lemma~\ref{lemma:fas}. Then, if we choose initial conditions $X_0=c_2v_2$, then the forward trajectory will remain close to $H$. 

In numerical simulations, of course, errors accumulate, and the trajectory can only remain close to $H$ for a finite time. In figure~\ref{fig:pred}(a), we show an example of a trajectory which remains close to an unstable cycle for a long time. Here, the cycle in question is the one between fixed points with one node active in the $(5,1)$-graph. In figure~\ref{fig:pred}(b) we show the coordinates from (a) at the bottom of each `valley' in the time series: this corresponds to the coordinates at the transition between epochs and hence the coordinates $X_j$. For the particular transition matrix for this cycle with the noted parameters, $\lambda_1$ and $\lambda_2$ satisfy the assumptions given above, and $\lambda_1$ is complex. Using these values for $\lambda_1$ and $\lambda_2$, we use least squares to estimate the values of $c_1$, $c_2$ and $c_3$ to fit the curve $X_j=c_1\lambda_2^j+c_2|\lambda_1|^j \cos(j\arg(\lambda_1)+c_3)$ to the obtained data. The dashed line in (b) shows the curve $X_j=c_1\lambda_2^j$, that is, the data that would be expected if there were no numerical error and we were able to start exactly on the required eigenvector. The solid curve includes the second term and is clearly an excellent fit to the data points.

\begin{figure}
\setlength{\unitlength}{1mm}
\begin{center}
\begin{picture}(85,90)(0,0)
\put(0,0){\includegraphics[trim=5mm 5mm 5mm 5mm,clip=true,width=85mm]{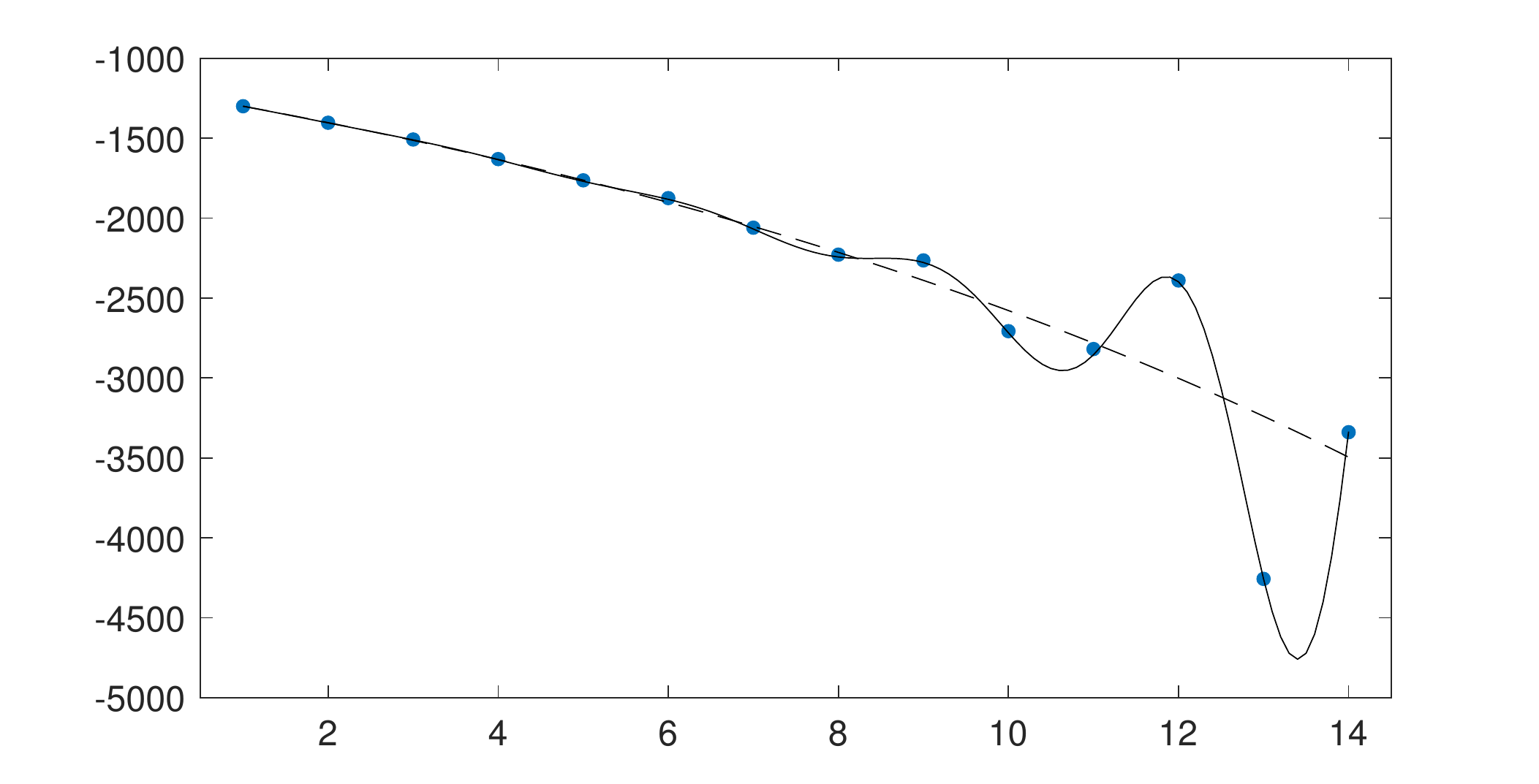}}
\put(0,45){\includegraphics[trim=5mm 5mm 5mm 5mm,clip=true,width=85mm]{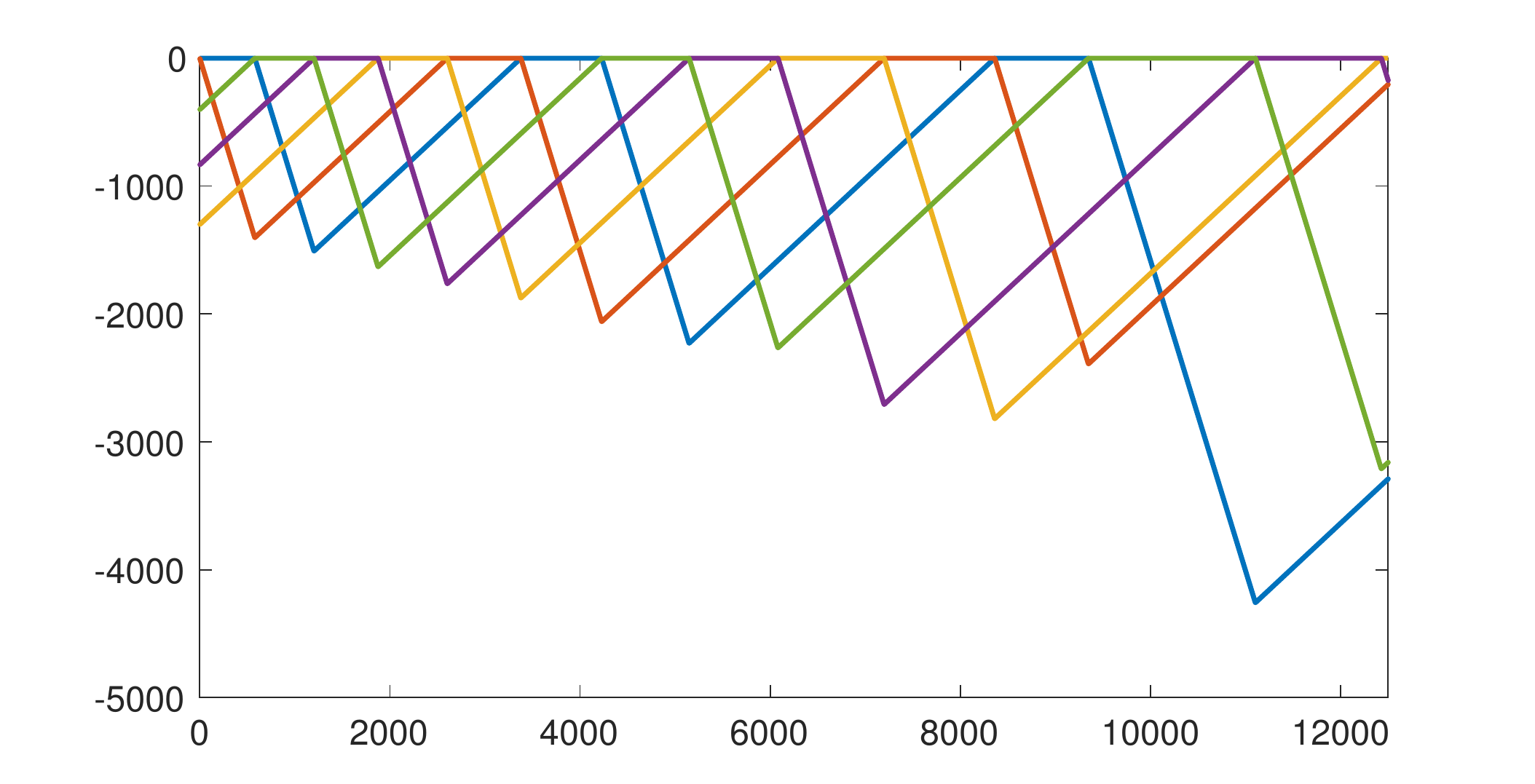}}

\put(0,65){\rotatebox{90}{$\log x^{(k)}$}}
\put(0,20){\rotatebox{90}{$X_j$}}

\put(72,0){$j$}
\put(70,44.5){$i$}

\put(-1,83){(a)}
\put(-1,38){(b)}

\end{picture}
\end{center}
\caption{\label{fig:pred} (a) The figure shows a trajectory of equation~\eqref{eq:five} with initial condition near the unstable heteroclinic cycle between fixed points with one node active. The components $x^{(1)},\dots,x^{(5)}$ are represented by the colours blue, red, yellow, purple and green, respectively. Parameters are $r=2$, $\gamma=6.24$. The blue dots in panel (b) show the coordinates at the bottom of each of the `valleys' in the time series in panel (a). The dashed curve is $X_j=c_1\lambda_2^j$. The solid curve is  $X_j=c_1\lambda_2^j+c_2|\lambda_1|^j \cos(j\arg(\lambda_1)+c_3)$. See text for more details. 
}
\end{figure}

\section{Analysis of ring graph with $m$-nearest neighbour coupling}
\label{sec:nm}

In this section we expand on our results from the previous section to discuss ring graphs with $m$-nearest neighbour coupling ($m<\nnodes/2$). We find that, depending on the number of nodes, $\nnodes$, in the graph, and the number $m$ of neigbours coupled, different types of heteroclinic networks can arise in the dynamics. Some of these have dynamics which can be described using the same methods as in the previous section, and some of these are more complex. We refer to the $\nnodes$-node graph, with $m$-nearest neighbour coupling as the \emph{$(\nnodes,m)$-graph}.

\begin{figure}
\begin{center}
\setlength{\unitlength}{1mm}
\begin{picture}(50,100)(-3,0)
\put(0,60){\includegraphics[width=5cm]{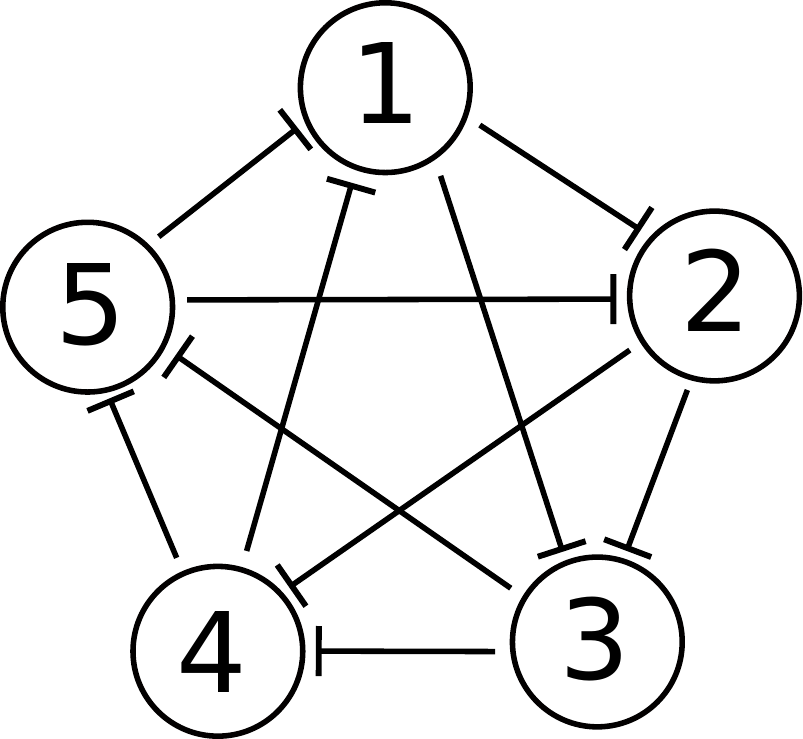}}
\put(0,0){\includegraphics[width=5cm]{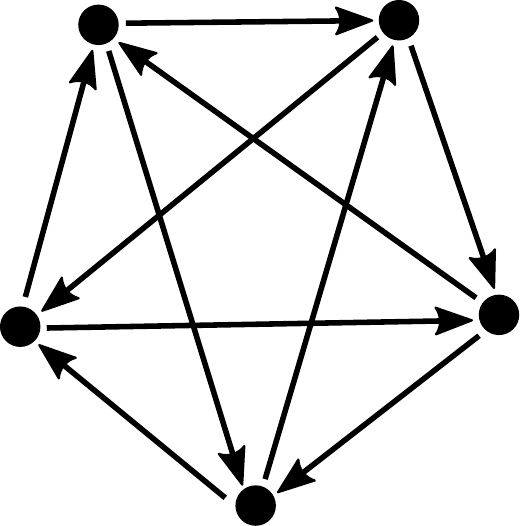}}

\put(41,47){$\xi_1$}
\put(4,47){$\xi_3$}
\put(47,15){$\xi_4$}
\put(1,14.5){$\xi_5$}
\put(17,1){$\xi_2$}

\put(-3,100){(a)}
\put(-3,47){(b)}

\end{picture}
\end{center}
\caption{\label{fig:five2} The figure shows, in panel (a), the physical network of nodes for the $(5,2)$-graph, and in panel (b), the resulting heteroclinic network between fixed points.
}
\end{figure}
The smallest graph which falls into this category (with $m\neq 1$)  is the a five-node graph, with $m=2$, shown in figure~\ref{fig:five2}(a). In this example, it is not possible to have any fixed points which have more than one component non-zero, and the network of heteroclinic connections between these fixed points is shown on the right-hand side of figure~\ref{fig:five2}. There are two-subcycles in this network between five fixed points, and the transition matrices for these cycles can be found using the methods in the previous section. We find, for the cycle $\xi_1\rightarrow \xi_5 \rightarrow \xi_4 \rightarrow \xi_3 \rightarrow \xi_2 $ that the transition matrix is 
\[
\begin{pmatrix}
-1 & 1 & 0 \\
\delta & 0 & 1 \\
\delta & 0 & 0
\end{pmatrix}.
\]
The eigenvalues can be found explicitly as $\pm\sqrt{\delta}$, $-1$, and thus this cycle can never be fragmentarily asymptotically stable.
For the cycle $\xi_1\rightarrow \xi_4 \rightarrow \xi_2 \rightarrow \xi_5 \rightarrow \xi_3 $ the transition matrix is
\[
\begin{pmatrix}
\delta & 1 & 0 \\
-1 & 0 & 1 \\
\delta & 0 & 0
\end{pmatrix}.
\]
which has eigenvalues $\delta$, $\pm i$, but the eigenvector for the eigenvalue $\delta$ has a zero in the second component, and so this cycle can also never be fragmentarily asymptotically stable. There are other routes trajectories can take whilst still approaching the network: in fact, this network is equivalent to the Rock-Paper-Scissors-Lizard-Spock network investigated by Postlethwaite and Rucklidge (for ODEs)~\cite{postlethwaite2020}, which has some very complicated dynamics: see figure~\ref{fig:rpsls_ts} for a typical time series.

\begin{figure}
\setlength{\unitlength}{1mm}
\begin{center}
\begin{picture}(85,40)(0,0)
\put(0,0){\includegraphics[trim=5mm 5mm 5mm 5mm,clip=true,width=85mm]{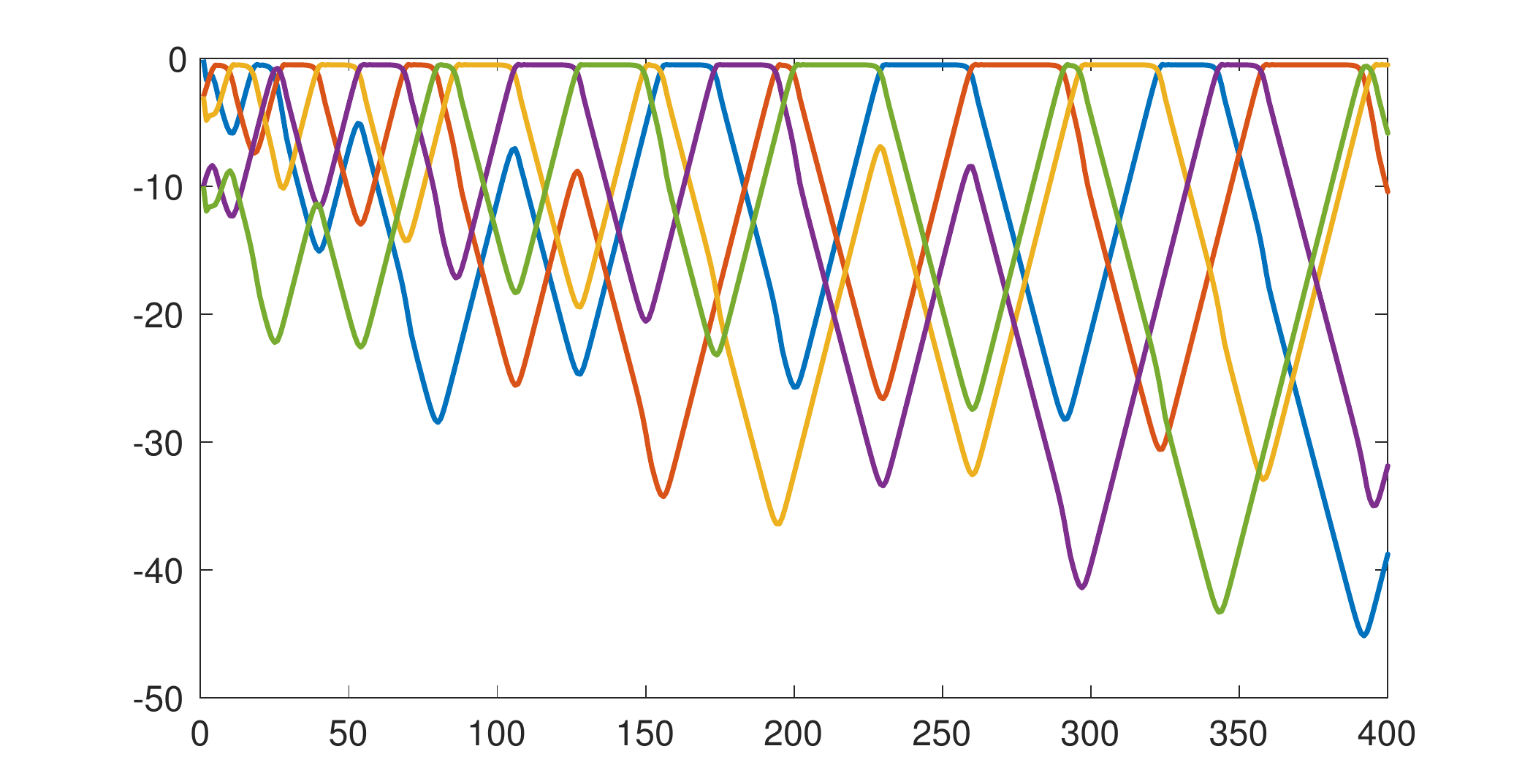}}

\put(0,20){\rotatebox{90}{$\log x^{(i)}$}}
\put(74,0){$i$}

\end{picture}
\end{center}
\caption{\label{fig:rpsls_ts} The figure show a typical trajectory for the network shown in figure~\ref{fig:five2}. The components $x^{(1)},\dots,x^{(5)}$ are represented by the colours blue, red, yellow, purple and green, respectively. Parameters are $r=2.5$, $\gamma=3$. Although the network appears to be attracting (trajectories get closer to the fixed points as time increases), the sequence in which the fixed points are visited is irregular and complicated.}
\end{figure}

As a second example, consider the seven-node graph with two-nearest neighbour coupling, shown in figure~\ref{fig:seven2}(a). In this example, there are seven fixed points which have exactly one non-zero component, and seven with exactly two non-zero components. The heteroclinic network between these fixed points is shown in figure~\ref{fig:seven2}(b). Note the similarity in structure between this network and the network shown in figure~\ref{fig:five}(b). The stability of the cycles between the fixed points with either one or two non-zero components can be computed in exactly the same way as shown previously, and we find that the cycle between the fixed points with two non-zero components can be stable if $\delta$ is large enough, but the other cycle cannot.

\begin{figure}
\begin{center}
\setlength{\unitlength}{1mm}
\begin{picture}(75,130)(-3,0)
\put(10,80){\includegraphics[width=5cm]{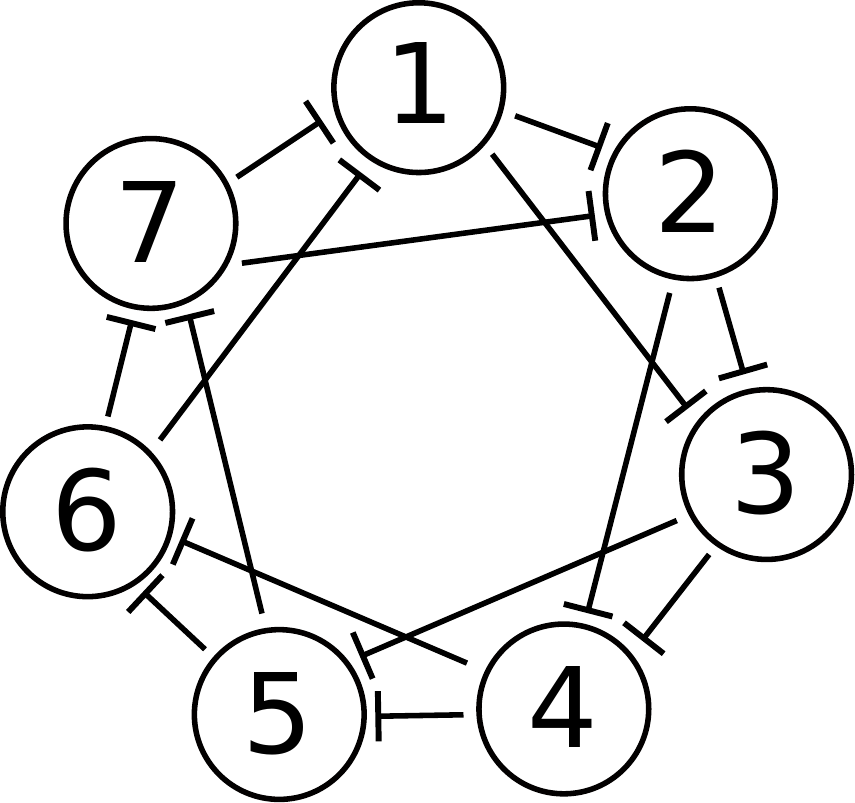}}
\put(0,0){\includegraphics[width=7cm]{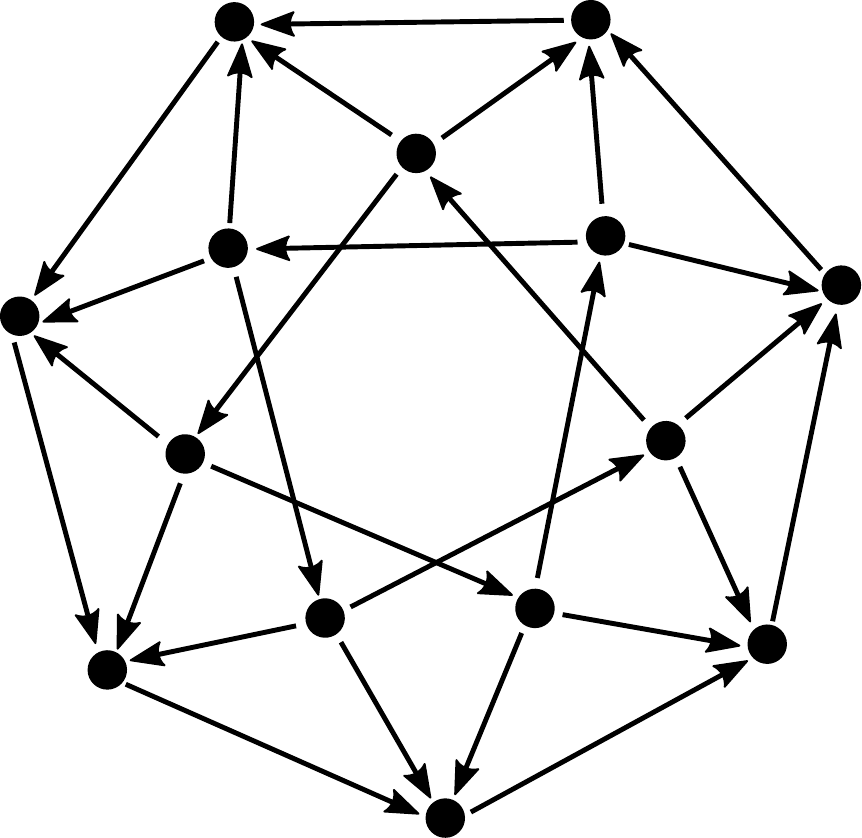}}

\put(51,67){$\xi_{1,4}$}
\put(11,67){$\xi_{4,7}$}
\put(-1,47){$\xi_{3,7}$}
\put(68,48){$\xi_{1,5}$}
\put(1,14){$\xi_{3,6}$}
\put(64,14){$\xi_{2,5}$}
\put(30,-1){$\xi_{2,6}$}

\put(52,50){$\xi_{1}$}
\put(15,50){$\xi_{7}$}
\put(33,59){$\xi_{4}$}
\put(53,36){$\xi_{5}$}
\put(13,35){$\xi_{3}$}
\put(44,15){$\xi_{2}$}
\put(24,14){$\xi_{6}$}

\put(0,124){(a)}
\put(0,67){(b)}

\end{picture}
\end{center}
\caption{\label{fig:seven2} The figure shows, in panel (a), the physical network of nodes for the $(7,2)$-graph, and in panel (b), the resulting heteroclinic network between fixed points.
}
\end{figure}

As in the examples of the $(\nnodes,1)$ graphs, when transitioning from one fixed point to another along a heteroclinic connection, the number of active nodes may increase, but it can never decrease. We refer to those fixed points with the largest number of active nodes as the \emph{maximally-active} fixed points, and in the following, discuss the possible network of heteroclinic connections between these nodes for a general $(\nnodes,m)$-graph. We have the following:
\begin{itemize}
\item If $\nnodes = 0 \mod (m+1)$, then all maximally-active fixed points are asymptotically stable, and there are no heteroclinic connections.
\item If $\nnodes=1 \mod (m+1)$, then all maximally-active fixed points have an unstable manifold of dimension one, and there exists a heteroclinic cycle between the fixed points. The transition matrix for this heteroclinic cycle takes the form of $M$ in equation~\eqref{eq:M}, with all rows starting with a $\delta$.  It can thus be asymptotically stable if $\delta$ is sufficiently large.
\item If $\nnodes=p \mod (m+1)$, $p\neq 0,1$, then all maximally-active fixed points have an unstable manifold of dimension $p$, and there exists a heteroclinic network between the fixed points. We conjecture that this network can be asymptotically stable for large enough $\delta$, but may have complex dynamics.
\end{itemize}


\section{Discussion}
\label{sec:disc}

In this paper we have shown that heteroclinic networks can typically arise in the phase space dynamics of certain types of symmetric (physical space) graphs with inhibitory coupling. We further showed that at most one of the subcycles can be stable for an open set of parameters, and hence observable in simulations. Many studies of coupled map lattices and complex networks seek asymptotic behaviour described by a Sinai-Ruelle-Bowen (SRB) invariant measure\cite{tanzi2021existence}. However the dynamics associated with a stable heteroclinic cycle preclude this behaviour --- the dynamics is not ergodic, and long-term averages do not converge. In particular, averaged observed quantities such as Lyapunov exponents are ill-defined, and will oscillate at a progressively slower rate.

From this work arises the more general question of whether or not a stable heteroclinic cycle is likely to be found in the corresponding phase space network of a randomly generated physical network of nodes. We performed some preliminary investigations on this question numerically, for randomly generated Erdos--R\'enyi graphs (where each edge exists with some fixed probability). We find that the probability of existence of heteroclinic cycles in the phase space network increases both as the number of nodes in the physical network increases, and also as the density of edges in the physical network decreases. However, even in cases where the probability of existence of heteroclinic cycles is very high, there is also a very high chance of the existence of a stable fixed point in the phase space. Thus, the question of the stability of the heteroclinic cycle is important in determining whether or not the heteroclinic cycle, and associated slowing down of trajectories, will be observed in the phase space associated with a randomly generated graph. 

There methods we describe in this paper can be used to determine the stability of any specific heteroclinic cycle, but as yet it is not clear how one would determine the likelihood of a heteoclinic cycle to be stable in such a randomly-generated network.

In this work, we consider specific dyanmics for each single node in the directed graph in physical space; specifically, we suppose that there is only an `on' state, and an `off' state. If more general dyanmics are allowed, then other types of heteroclinic cycles can be found in ring graphs~\cite{buono2000heteroclinic}. 

An obvious extension of this work would include different types of coupling and/or different dynamics in the uncoupled nodes. For instance, a situation which might better exemplify neuronal dyanmics could include both inhibitory and excitatory types of connections, and nodes could require a `kick' from an `on' excitatory connection in order to leave a stable zero state. Networks of this type were investigated by Ashwin and Postlethwaite~\cite{ashwin2021excitable}, although they made no attempt to classify the possible heteroclinic networks which could occur. Further work on this avenue of investigation is ongoing.


%
%

%

\begin{acknowledgments}
CMP acknowledges support from the Marsden Fund Council from New Zealand Government funding, managed by The Royal Society Te Ap\={a}rangi.
\end{acknowledgments}

\bibliography{hetpaper}

\end{document}